\definecolor{red}{rgb}{1,0,0}
\definecolor{blue}{rgb}{0,.6,.9}
\definecolor{green}{rgb}{0,.6,0}
\definecolor{purp}{rgb}{.5,0,.5}
\numberwithin{figure}{section}   % added LH 11/15/17
\newtheorem{thm}{Theorem}[section]
\newtheorem{cor}[thm]{Corollary}
\newtheorem{lem}[thm]{Lemma}
\newtheorem{prop}[thm]{Proposition}
\newtheorem{conj}[thm]{Conjecture}
\theoremstyle{definition}
\newtheorem{rem}[thm]{Remark}
\theoremstyle{definition}
\theoremstyle{definition}
\newtheorem{ex}[thm]{Example}
\newcommand{\rad}{\operatorname{rad}} % new macro 10/19 LH
\newcommand{\thc}{\operatorname{th}_c}
\newcommand{\thcx}{\operatorname{th}_c^{\x}}
\newcommand{\capt}{\operatorname{capt}} % new macro 11/12 LH
\newcommand{\dist}{d}%{\operatorname{dist}}  
\newcommand{\diam}{\operatorname{diam}}
\newcommand{\x}{\times}
\newcommand{\bit}{\begin{itemize}}
\newcommand{\eit}{\end{itemize}}
\newcommand{\ben}{\begin{enumerate}}
\newcommand{\een}{\end{enumerate}}
\newcommand{\beq}{\begin{equation}}
\newcommand{\eeq}{\end{equation}}
\newcommand{\bea}{\begin{eqnarray*}} % * means no number
\newcommand{\eea}{\end{eqnarray*}}
\newcommand{\bpf}{\begin{proof}}
\newcommand{\epf}{\end{proof}\ms}
\newcommand{\bmt}{\begin{bmatrix}}
\newcommand{\emt}{\end{bmatrix}}
\newcommand{\ms}{\medskip}
\newcommand{\lc}{\left\lceil}
\newcommand{\rc}{\right\rceil}
\newcommand{\lf}{\left\lfloor}
\newcommand{\rf}{\right\rfloor}
\newcommand{\du}{\,\dot{\cup}\,}
\newcommand{\noi}{\noindent}
\newcommand{\lp}{\!\left(}
\newcommand{\rp}{\right)}
\title{
Optimizing the trade-off between number of cops and capture time in Cops and Robbers
}
\author{Anthony Bonato\thanks{Ryerson University, Toronto, ON, Canada. (abonato, sean.english)@ryerson.ca.}\and
Jane Breen\thanks{Department of Mathematics, Iowa State University, Ames, IA 50011, USA, jane.breen@uoit.ca, (geneson, jmsdg7, hogben, reinh196)@iastate.edu.}\and Boris Brimkov\thanks{Department of Mathematics and Statistics, Slippery Rock University, Slippery Rock, PA 16057, USA,  boris.brimkov@sru.edu.}\and Joshua Carlson\footnotemark[2]\and Sean English\footnotemark[1]\and Jesse Geneson\footnotemark[2]\and Leslie Hogben\footnotemark[2]\ \thanks{American Institute of Mathematics, 600 E. Brokaw Road, San Jose, CA 95112, USA, hogben@aimath.org} 
\and K.E. Perry\thanks{Department of Mathematics, University of Denver, Denver, CO 80208, USA,  kperry@soka.edu.}\and Carolyn Reinhart\footnotemark[2]}
\begin{document}
\maketitle
%\linenumbers

\vspace{-10pt}

\begin{abstract} 
The cop throttling number $\thc(G)$ of a graph $G$ for the game of Cops and Robbers is  the minimum of $k + \capt_k(G)$, where $k$ is the number of cops and $\capt_k(G)$ is the minimum number of rounds needed for $k$ cops to capture the robber on $G$ over all possible games in which both players play optimally. 
In this paper, we construct a family of graphs having $\thc(G)= \Omega\lp n^{2/3}\rp$, establish a sublinear upper bound on the cop throttling number, and show that the cop throttling number of chordal graphs is $O\lp\sqrt{n}\rp$. We also introduce the product cop throttling number $\thcx(G)$ as a parameter that minimizes the person-hours used by the cops. 
This parameter extends the notion of speed-up that has been studied in the context of parallel processing and network decontamination.
We establish bounds on the product cop throttling number in terms of the cop throttling number, characterize graphs with low product cop throttling number, and show that for a chordal graph $G$, $\thcx(G)=1+\rad(G)$.
\end{abstract}

\noi {\bf Keywords} Cops and Robbers, throttling, product throttling,  chordal graph, graph searching

\noi{\bf AMS subject classification} 05C57, 91A43 
%%%%%%%%%%%%%%%%%%%%%%%%%%%%%%%%%%%%%%%%%%%
%%%%%%%%%%%%%%%%%%%%%%%%%%%%%%%%%%%%%%%%%%%

\section{Introduction}\label{sintro}

%The game of Cops and Robbers is a two-player game played on a graph $G$ on $n$ vertices. One player controls a team of cops and the other controls a single robber. The game starts with the cops choosing a multiset of vertices to occupy, and then the robber chooses a vertex to occupy. In each round of the game, each cop moves to a neighbor of the vertex they currently occupy, or remains at the same vertex; then, the robber moves to a neighbor of the robber's vertex the robber currently occupy, or remains at the same vertex.

The game of Cops and Robbers is a perfect information two-player game played on a graph $G$ on $n$ vertices.  One player controls a team of cops and the other controls a single robber. The game starts with the cops choosing a multiset of vertices to occupy, and then the robber chooses a vertex to occupy.  In each round of the game, first each cop moves to a neighbor of the vertex they currently occupy or remains at the same vertex, and then the robber moves analogously. The aim for the cops is to \emph{capture} the robber (that is, move to the same vertex that the robber currently occupies), and the aim for the robber is to evade capture. %This game with a single cop first appeared as a puzzle in \cite{Dud} and its mathematical study was introduced independently in 
The game with a single cop was first introduced independently in \cite{NW83, Q78}. Graphs for which a single cop always has a winning strategy are called \emph{cop-win}. This was extended to the idea of having more than one cop, and the \emph{cop number} $c(G)$ of a graph $G$ is defined as the minimum number of cops required to capture the robber on $G$ \cite{AF84}. Meyniel's conjecture states that for any graph on $n$ vertices, $c(G) = O(\sqrt{n})$  \cite{frankl1987cops}.  For more background on Cops and Robbers, the reader is directed to \cite{CRbook, BP}.

Other questions may be asked of the game of Cops and Robbers, such as the \emph{capture time}, denoted $\capt(G)$, which is the number of rounds it takes  for $c(G)$ cops to capture the robber on the graph $G$, assuming all players follow optimal strategies \cite{BGHK09}. Capture time was generalized further in \cite{BPPR17} to consider the case where more cops  than necessary are used. That is, for any $k \geq c(G)$, the \emph{$k$-capture time} of $G$, denoted $\capt_k(G),$ is the minimum number of rounds it takes for $k$ cops to capture the robber on $G$, assuming that all players follow optimal strategies. It is interesting to explore the tradeoff between  the number of  cops and the capture time, which led to  the introduction of \emph{throttling} for the game of Cops and Robbers in \cite{CRthrottle}.

As in \cite{CRthrottle}, the \emph{cop throttling number} of a graph $G$ is denoted $\thc(G)$, and is defined as
\[ \thc(G) = \min_k\{ k + \capt_k(G)\},\]
where it is assumed that if $k<c(G)$, then the $k$-capture time is infinite.   %Positive semidefinite throttling was introduced in  \cite{PSDthrottle}, and in  \cite{CRthrottle} connections were established to cop throttling. 
It is known  that $\thc(G) = O(\sqrt{n})$ for several families of graphs $G$; in particular, this was shown for trees, unicyclic graphs, some Meyniel extremal families, and several others in \cite{CRthrottle}. It was also asked in that paper whether $\thc(G)= O(\sqrt n)$ for all graphs.  We answer this question in the negative by exhibiting a family of graphs $H_n$ of order $n$ with $\thc(H_n)= \Omega\lp n^{2/3}\rp$, and we establish a sublinear upper bound for the cop throttling number  (see Section \ref{Lthrottle}).  In Section~\ref{s:chord}, we  prove that for any chordal graph of order $n$ the $k$-capture time is equal to the $k$-radius and the cop throttling number  is $O\lp\sqrt{n}\rp$. We also answer an open problem from \cite{topdir} about classifying cop-win outerplanar graphs.

%In this article we introduce and study product throttling for the game of Cops and Robbers, which minimizes the \emph{product} of the number of cops and the capture time plus one.
The cop throttling number, which optimizes the sum of the resources used to accomplish a task and the time to accomplish the task, follows in the established study of throttling for other parameters (cf. \cite{powerdom-throttle, BY13throttling, Carlson, PSDthrottle}).  In the case of Cops and Robbers, arguably it is the person-hours that should be optimized, i.e., the product rather than the sum.   In Section~\ref{sprodthrot}, we study the problem of  optimizing the product of the resources used to accomplish a task and the time needed to complete that task. Note that if one minimizes the product $k\capt_k(G)$ over $k$, the minimum is always 0, achieved by $k=n$, where $n$ is the order of the graph $G$. Not only is this trivial, it is also misleading from a practical perspective because there is certainly a real cost to placing a cop on a vertex.   Thus, % for a graph $G$ of order $n$, 
we define the {\em product cop throttling number}  of a graph $G$ by 
\[ \thcx(G) =\min_k%{c(G)\le k \le n} 
\{k\left(1+\capt_k(G)\right)\!\}.%\vspace{-4pt} 
\] 
We follow the literature in using {\em cop throttling} to refer to throttling the sum, whereas when throttling the product, the word {\em product} is always explicitly included. 
The notion of product throttling has implicitly been studied in \cite{luccio-pagli-conference,luccio-pagli}, where the authors investigate the speed-up obtained by using a larger number of cops  when chasing the robber on grids and tori. In particular, they  define \emph{work} as $w_k=k \cdot \capt_k(G)$, and the \emph{speed-up} between using $j$ and $i$ cops, $j>i$, as $w_i/w_j$. They show that a super-linear speed-up may occur in certain classes of graphs. Our study of product cop throttling extends this idea by considering the number of cops that yields the largest possible speed-up. A similar study \cite{luccio-pagli-decontamination} in the context of network  decontamination shows that larger teams of agents may decrease the  overall work done to decontaminate a network. More generally, the  concepts of work, speed-up, and related optimization problems are  common in the design and analysis of parallel algorithms (see, e.g., \cite{karp-ramachandran} and the bibliography therein).

 In Section \ref{sprodthrot}, we establish bounds on the product cop throttling number in terms of the cop throttling number, characterize graphs with low product cop throttling number, show that $\thcx(G) =1+\rad(G)$  for any chordal graph $G$ (implying  that the product throttling number can be linear in the order of the graph), and 
construct a family of graphs $M(\ell)$,  where $\thcx(M(\ell))$ cannot be realized by any set of cardinality $c(M(\ell))$ nor by any set of cardinality $\gamma(M(\ell))$.       %cop throttling is used.

%\subsection{Mathematical preliminaries}

Throughout, we assume $G$ is a  simple undirected graph on $n$ vertices. Definitions of standard graph theory terms can be found in \cite{GTbook}.
 We refer to a multiset $S$ of vertices of $G$ as a \emph{capture set} if $|S|\ge c(G)$, since placing the cops on the vertices of $S$ ensures that the robber will be captured in a finite number of rounds. 
 As in \cite{CRthrottle}, $\capt(G;S)$ is defined to be the maximum number of rounds until the robber is captured (over all possible robber placements) with the cops starting on the vertices of $S$,   $\thc(G;S) = |S| + \capt(G;S)$, and $\thcx(G;S) = |S|(1+\capt(G;S))$.   With this notation, $\thc(G) = \min_{S \subseteq V(G)} \thc(G;S)$ and $\thcx(G) = \min_{S \subseteq V(G)} \thcx(G;S)$; note that the notation $A\subseteq B$ is applied to multisets.
For $k\ge c(G)$, it is also convenient to define $\thc(G,k)=\min_{|S|=k} \thc(G;S)$ and $\thcx(G,k)=\min_{|S|=k} \thcx(G;S)$.  With this notation, $\thc(G) = \min_{k} \thc(G,k)$ and $\thcx(G) = \min_{k} \thcx(G,k)$. Recall that the $k$-radius of a graph G is defined  to be \vspace{-3pt}
 \[\rad_k(G)=\min_{S\subseteq V, |S|=k}\max_{v\in V}\dist(v,S).\vspace{-3pt}\]

An induced subgraph $H$ of $G$ is a \emph{retract} of a graph $G$ if there is a mapping $\varphi:V(G)\to V(H)$ whose restriction to $V(H)$ is the identity and such that $uv\in E(G)$ implies $\varphi(u)\varphi(v)\in E(H)$ or $\varphi(u)=\varphi(v)$; such a mapping $\varphi$ is called a {\em retraction}.  %\footnote{the  alternate condition $\varphi(u)=\varphi(v)$ is not needed for reflexive graphs.} 	
	  The robber's {\em shadow} on a retract is the image of the robber under the retraction.

%%%%%%%%%%%%%%%%%%%%%%%%%%%%%%%%%%%%%%%%%%%
%%%%%%%%%%%%%%%%%%%%%%%%%%%%%%%%%%%%%%%%%%%
%\section{Large cop throttling number}\label{Lthrottle}
 \section{Bounds for cop throttling number}\label{Lthrottle}
We  begin this section by answering negatively the question of whether  $\thc(G) =O(\sqrt{n})$ for all graphs $G$ with $n$ vertices \cite[Question 4.5]{CRthrottle}.   We then establish  the first sublinear upper bound on the cop throttling number for all connected graphs.  Finally, we improve the upper bound  given in \cite{CRthrottle} for cop throttling number for unicyclic graphs.
% In this section we consider   the maximum possible throttling number of a graph on $n$  vertices. We begin by answering negatively the question of whether  $\thc(G) =O(\sqrt{n})$ for all graphs $G$ with $n$ vertices \cite[Question 4.5]{CRthrottle}.   We then establish  the first sublinear bound on the cop throttling number.  

%%%%%%%%%%%%%%%%%%%%%%%%%%%%%%%%%%%%%%%%%%%

\subsection{Graphs with high cop throttling number}\label{sshighsumthrot}

 In this section,  we construct a family of graphs of order $n$ with throttling number  $\Omega(n^{2/3})$. We first prove a more general result that  implies the $\Omega(n^{2/3})$ bound  by using the existence of graphs with cop number $\Omega\lp\sqrt n\rp$.  This  result could also be used to improve this lower bound on the maximum cop throttling number if in the future,  Meyniel's conjecture is disproved.

\begin{thm} \label{mainhighth}
 Suppose that there exists a family of connected graphs of all orders $n$ with cop number $\Omega(n^{\alpha})$ for    a fixed real number $\alpha\in[\frac{1}{2} , 1)$. Then   there exist connected graphs $H_n$ on $n$ vertices with   $\thc(H_n)=\Omega\lp n^{1/(2-\alpha)}\rp\!$.
\end{thm}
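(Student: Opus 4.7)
The plan is to construct $H_n$ by taking a graph $F_m$ from the assumed family (so that $c(F_m) \geq c_0 m^{\alpha}$ for a constant $c_0 > 0$) and attaching a pendant path of length $\ell$ to each vertex of $F_m$, with $m$ and $\ell$ chosen at the end so that $m(\ell+1) = n$.

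First, I would establish that the map $\varphi : V(H_n) \to V(F_m)$ which fixes $V(F_m)$ and sends each pendant-path vertex to its base is a graph retraction. Consequently $c(H_n) \geq c(F_m) \geq c_0 m^{\alpha}$, so every capture set $S$ of $H_n$ must satisfy $|S| \geq c_0 m^{\alpha}$. This forces the ``number of cops'' part of any throttling strategy to be at least $c_0 m^{\alpha}$.

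Second, I would give a robber-hiding lower bound on $\capt(H_n;S)$ when $|S|$ is small. If $|S| = k < m$, then by pigeonhole some $v \in V(F_m)$ has no cop in $S$ at $v$ or on its attached pendant path; starting the robber at the tip of that path places her at distance at least $\ell$ from every cop, forcing $\capt(H_n;S) \geq \ell$ via the general inequality $\capt_k \geq \rad_k$. Combining the two ingredients, for $|S| = k$ we obtain $\thc(H_n;S) \geq c_0 m^{\alpha} + \ell$ when $c_0 m^{\alpha} \leq k < m$ and $\thc(H_n;S) \geq m$ when $k \geq m$, so $\thc(H_n) \geq \min\{c_0 m^{\alpha} + \ell,\, m\}$.

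Finally, I would choose $m$ and $\ell$ subject to $m(\ell+1) = n$ so that $\min\{c_0 m^{\alpha} + \ell,\, m\} = \Omega(n^{1/(2-\alpha)})$. The main obstacle lies in this parameter balance: the naive equalization of the two terms in the minimum only produces $\Omega(\sqrt n)$, so attaining the sharper exponent $1/(2-\alpha)$ requires strengthening the capture-time bound in the second ingredient. I expect this to come from combining the pendant-path hiding argument with a shadow-strategy analysis on $F_m$, where cops positioned on pendant paths contribute only delayed shadows to the chase on $F_m$; such a refinement should yield $\capt(H_n;S) = \Omega(\ell + m^{\alpha})$ (or a similar expression in $k$) for capture sets with $|S|$ close to $c(F_m)$, and then the choice $m \approx n^{1/(2-\alpha)}$ with $\ell \approx n^{(1-\alpha)/(2-\alpha)}$ would yield the claimed $\Omega(n^{1/(2-\alpha)})$ lower bound.
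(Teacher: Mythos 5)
Your construction is genuinely different from the paper's, and it does not work: a single copy of the hard graph $F_m$ with pendant paths cannot produce the exponent $1/(2-\alpha)$. Your two ingredients give $\thc(H_n)\ge\min\{m,\;c_0m^{\alpha}+\ell\}$ with $m(\ell+1)=n$, and no choice of $m$ makes this $\Omega\lp n^{1/(2-\alpha)}\rp$: for the first term to be that large you need $m\ge n^{1/(2-\alpha)}$, but then $\ell\le n^{(1-\alpha)/(2-\alpha)}$ and $c_0m^{\alpha}\le c_0n^{\alpha}$, and both exponents are strictly smaller than $1/(2-\alpha)$ for $\alpha\in[\frac12,1)$ (note $\frac{1}{2-\alpha}-\alpha=\frac{(1-\alpha)^2}{2-\alpha}>0$). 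The best your framework yields is $\Theta(n^{\alpha})$, attained degenerately at $m=n$, $\ell=0$; for $\alpha=\frac12$ that is only $\Omega(\sqrt n)$, which does not even beat the bound the theorem is meant to surpass. With your suggested parameters the quantity $c_0m^{\alpha}+\ell$ is $\Theta\lp n^{\alpha/(2-\alpha)}\rp$, so the hoped-for refinement $\capt(H_n;S)=\Omega(\ell+m^{\alpha})$ would not close the gap even if it held --- and it is not justified, since a high cop number does not force a long chase once that many cops are available. In fact the construction itself fails, not just the proof: take $F_m$ to be a Meyniel extremal family with $\thc(F_m)=O(\sqrt m)$ (such families exist by \cite{CRthrottle}). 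A $\thc$-optimal cop set on the retract $F_m$ catches the robber's shadow, after which one cop walks down the single pendant path the robber occupies, so $\thc(H_n)\le \thc(F_m)+\ell$; for $\alpha=\frac12$, $m\approx n^{2/3}$, $\ell\approx n^{1/3}$ this is $O(n^{1/3})$, far below the target $n^{2/3}$.

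The missing idea is amplification by taking \emph{many} disjoint copies of the hard graph. The paper starts from a spider with roughly $n^{(1-\alpha)/(2-\alpha)}$ legs, each of length roughly $n^{1/(2-\alpha)}$, and replaces the far half of each leg by a copy $T_i$ of a graph with cop number $\Omega\lp n^{\alpha/(2-\alpha)}\rp$. Then either the cops number at least $(\text{number of legs})\times(\text{cop number of one copy})=\Omega\lp n^{1/(2-\alpha)}\rp$, in which case the cop count alone suffices, or by pigeonhole some copy $T_j$ initially contains fewer than $c(T_j)$ cops; the robber hides in $T_j$ and evades until a cop from outside traverses the remaining half of that leg, which takes $\Omega\lp n^{1/(2-\alpha)}\rp$ rounds. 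It is this product of the number of copies with the per-copy cop number --- rather than the cop number of the whole graph --- that supplies the extra factor your single-copy construction cannot provide.
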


\begin{figure}[h]
\centering
\scalebox{.4}{\includegraphics{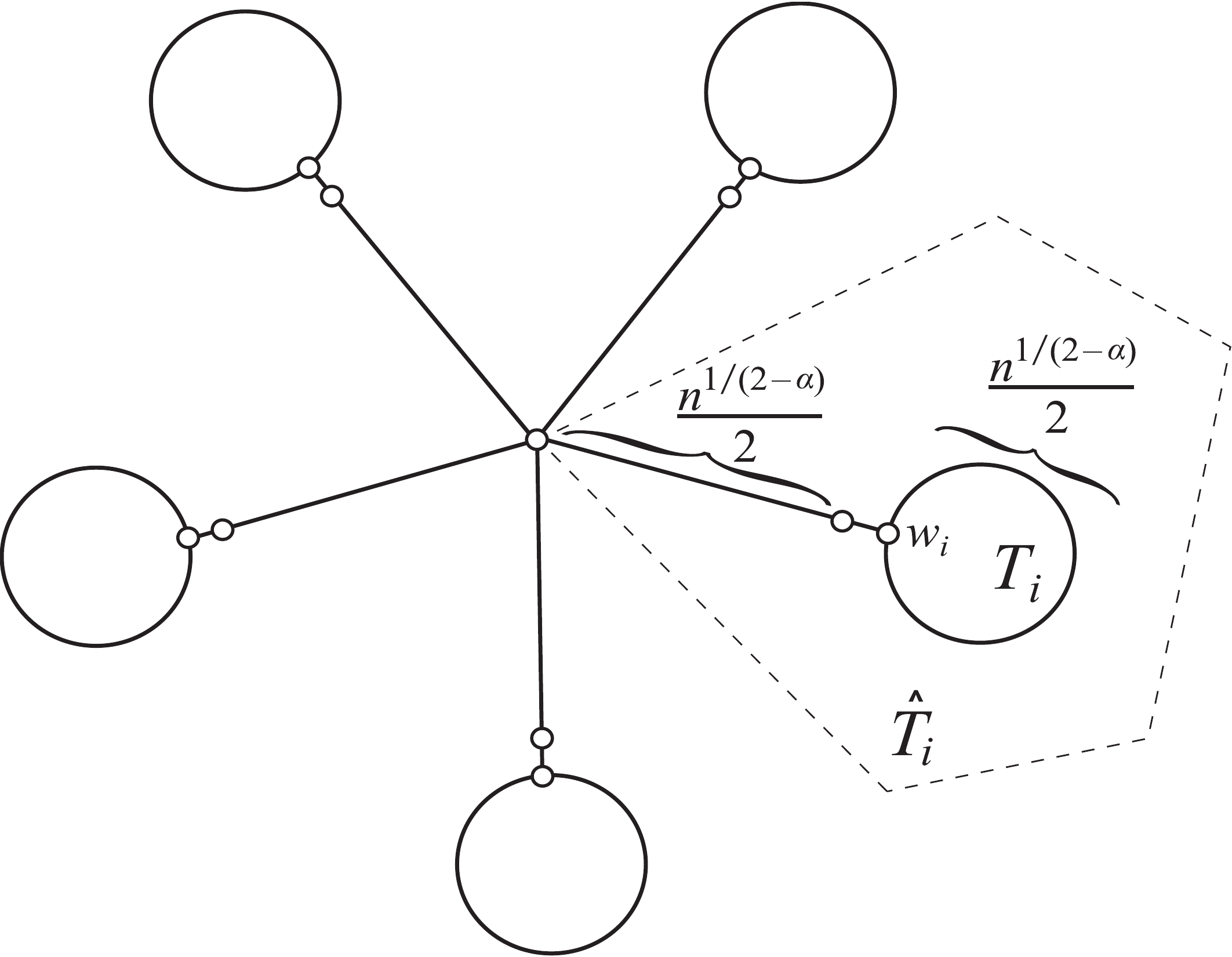}}
\caption{ Construction of the  connected graph $H_n$ of order $n$ with cop throttling number $\Omega\lp n^{1/(2-\alpha)}\rp$.  \label{highth}}
\end{figure}

\begin{proof}
By assumption, there exists a constant $b$ such that
 there exists a connected graph $Q(n)$ on $n$ vertices with  $c(Q(n))\ge b n^{\alpha}$. We assume $n$ is sufficiently large that the distinction between floor and ceiling does not matter except where marked, and thus treat quantities as integers.
%for all $n$ sufficiently large

A {\em spider} is a tree with exactly one vertex of degree three or more, called the {\em body vertex}. 
Start with a spider  on $n$ vertices in which there are approximately $n^{(1-\alpha)/(2-\alpha)}$ legs each of length approximately $n^{1/(2-\alpha)}$ and let $v$ be the body vertex.  
Form a new graph $H_n$  by  replacing in each leg
 the $\frac 1 2 n^{1/(2-\alpha)}$ vertices farthest from  $v$ %(the end part of the leg) 
 by a subgraph $T_{i}=Q(\frac 1 2 n^{1/(2-\alpha)})$. The subgraph $T_i$ is connected by one edge from some vertex $w_{i}$ of $T_i$  to the end of the leg that remains.  % (the vertex $u_{i}$ of the part of the original leg that was adjacent to a replaced vertex.
Figure \ref{highth} depicts the construction of $H_n$.  The subgraph  $\hat T_i$ is the component of $H_n-v$ that contains $T_i$.  Observe that  $c(T_i)\ge b\lp\frac 1 2 n^{1/(2-\alpha)}\rp^{\alpha}=\frac b {2^\alpha} n^{\alpha/(2-\alpha)}$.

We show that $\thc(H_n)=\Omega\lp n^{1/(2-\alpha)}\rp$.   If   $S$ is a set of cops with   $|S|\ge \frac b {2^\alpha} n^{1/(2-\alpha)}$, then $\thc(H_n;S)=\Omega(n^{1/(2-\alpha)})$.  So  assume that   $|S|< \frac b {2^\alpha} n^{1/(2-\alpha)}$.  By the pigeonhole principle, there exists a subgraph $\hat T_{j}$ that initially has at most 
\[\lc\frac{\frac b {2^\alpha} n^{1/(2-\alpha)}}{n^{(1-\alpha)/(2-\alpha)}}\rc-1=\lc\frac b {2^\alpha} n^{\alpha/(2-\alpha)}\rc-1<\frac b {2^\alpha} n^{\alpha/(2-\alpha)}\] 
cops.   If the robber starts on $T_{j}$, then the robber can evade capture as long as there are at most  $\frac b {2^\alpha} n^{\alpha/(2-\alpha)}-1< c(T_j)$ cops on $T_{j}$. The robber can just use the same strategy that they would to avoid capture by  $\frac b {2^\alpha} n^{\alpha/(2-\alpha)}-1$ cops in $T_j$. Thus, the robber is safe at least until some cop who was initially outside $\hat T_{j}$ reaches $w_{j}$. In this case, the capture time is at least $\frac 1 2 n^{1/(2-\alpha)}$, which gives  $\thc(H_n)=\Omega\lp n^{1/(2-\alpha)}\rp$.   \end{proof}

The preceding theorem has many interesting applications. Since it is known that there exist graphs with  cop number $\Omega(n^{1/2})$, we can apply Theorem \ref{mainhighth} to %construct  graphs $H_n$ as in Theorem \ref{mainhighth} satisfying $\thc(H_n)=\Omega\lp n^{2/3}\rp$, 
produce a negative answer to the second part of Question 4.5 in \cite{CRthrottle}, which asked whether $\thc(G)=O(\sqrt n)$ for all graphs $G$ of order $n$.

\begin{thm}\label{t:Mext}{\rm\cite[Theorem 3.8]{CRbook}, \cite{praalat2010does}} There exist connected graphs on $n$ vertices with cop number at least $\sqrt{\frac n 8}$ for all $n \geq 72$.
\end{thm}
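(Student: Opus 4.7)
My plan is to combine a standard degree/girth lower bound on the cop number with an explicit construction of sparse high-girth graphs, and then patch up the range of $n$ so that the claim holds for every $n \geq 72$ rather than only a sparse sequence.

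The central ingredient is the classical observation that if a connected graph $G$ has girth at least $5$ and minimum degree $\delta(G) \geq d$, then $c(G) \geq d$. The argument is a straightforward local one: suppose the robber is at a vertex $v$ whose $d$ neighbors are pairwise non-adjacent and share no common neighbor besides $v$ (both guaranteed by girth $\geq 5$). For the cops to force the robber to move to a neighbor they can guard, every neighbor of $v$ must be within distance one of a distinct cop; with fewer than $d$ cops, some neighbor is safe and the robber escapes there. Iterating gives an infinite evasion strategy whenever fewer than $d$ cops are used. I would first state and prove this lemma carefully, since it is the engine of the construction.

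Next, to realize the lower bound $\sqrt{n/8}$ for a dense set of $n$, I would use incidence graphs of projective planes. For a prime power $q$, the incidence graph of a projective plane of order $q$ is a $(q+1)$-regular bipartite graph on $N(q) := 2(q^2+q+1)$ vertices with girth exactly $6$. By the lemma, its cop number is at least $q+1 > \sqrt{N(q)/2} \geq \sqrt{N(q)/8}$, so the statement holds for each $n = N(q)$ with $q$ a prime power. The factor $1/8$ rather than $1/2$ inside the square root is what gives the slack needed to cover non-prime-power values of $n$.

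The main obstacle is ensuring the bound for every $n \geq 72$, since prime-power orders of projective planes leave gaps in the sequence $\{N(q)\}$. Here I would use Bertrand's postulate (or a tighter prime gap bound) to pick a prime $q$ with $N(q) \leq n < N(2q)$, and then extend the incidence graph $G_q$ to a connected graph $G_q^*$ on exactly $n$ vertices by attaching a pendant path of length $n - N(q)$ at some vertex $u$. The robber can play inside $G_q$ and ignore the appended path; the girth-and-degree argument restricted to the interior vertices of $G_q$ still forces at least $q+1$ cops, since the extra path only enlarges $G$ rather than removing neighbors. A short computation using Bertrand's postulate shows $q+1 \geq \sqrt{n/8}$ in this regime, and the numerical threshold $n \geq 72$ is used to absorb the first few cases where small-prime arithmetic is tight (one would verify $q=3$ handles $n$ near $N(3)=26$ and larger $q$ handles the rest). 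This patching is the most delicate step, and is why the theorem is stated with an explicit cutoff rather than asymptotically; the alternative, as in \cite{praalat2010does}, is to use a random graph $G(n,p)$ with $p$ tuned so that girth $\geq 5$ and minimum degree $\geq \sqrt{n/8}$ both hold asymptotically almost surely, yielding existence without explicit construction.
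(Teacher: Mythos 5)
This theorem is quoted in the paper from \cite{CRbook} and \cite{praalat2010does} without proof, and your argument reconstructs essentially the standard one underlying those sources: the incidence graph of a projective plane of order $q$ is $(q+1)$-regular with girth $6$, the Aigner--Fromme bound $c(G)\ge\delta(G)$ for graphs of girth at least $5$ gives $c\ge q+1>\sqrt{N(q)/2}$ on $N(q)=2(q^2+q+1)$ vertices, and Bertrand's postulate together with padding by a pendant path (which does not disturb the robber's evasion strategy inside the incidence graph) covers every $n\ge 72$ with the weaker constant $\sqrt{n/8}$. The construction and the numerics check out, so this is a correct proof along the same lines as the cited references.
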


The next result follows immediately by applying Theorem \ref{mainhighth} with $\alpha= 1/2$. % and $\ell=\frac{1}2n^{2/3}$.

\begin{cor}\label{omega23}
There exist connected graphs of all orders $n$ with cop throttling number  $\Omega(n^{2/3})$.
\end{cor}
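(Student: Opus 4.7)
The plan is to apply Theorem~\ref{mainhighth} directly, using the Meyniel-extremal family guaranteed by Theorem~\ref{t:Mext} as the required input family of high cop-number graphs. First I would note that Theorem~\ref{t:Mext} provides, for every sufficiently large $n$, a connected graph on $n$ vertices whose cop number is at least $\sqrt{n/8}$; this is precisely the hypothesis of Theorem~\ref{mainhighth} with exponent $\alpha = 1/2$ and constant $b = 1/\sqrt{8}$.

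Next I would substitute $\alpha = 1/2$ into the conclusion of Theorem~\ref{mainhighth}. The resulting exponent simplifies as
\[
\frac{1}{2-\alpha} \;=\; \frac{1}{2 - 1/2} \;=\; \frac{2}{3},
\]
so Theorem~\ref{mainhighth} delivers connected graphs $H_n$ on $n$ vertices with $\thc(H_n) = \Omega(n^{2/3})$, which is exactly the claim of the corollary.

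There is essentially no obstacle: the nontrivial combinatorics (the spider-with-replaced-legs construction and the pigeonhole-plus-local-evasion lower bound) has already been carried out inside the proof of Theorem~\ref{mainhighth}. The only bookkeeping to verify is that the $n \geq 72$ threshold in Theorem~\ref{t:Mext} is harmless for an asymptotic $\Omega$-statement, and that the intermediate subgraph sizes $\tfrac{1}{2} n^{1/(2-\alpha)} = \tfrac{1}{2} n^{2/3}$ used when invoking the extremal family inside $H_n$ are themselves large enough to lie in the regime where Theorem~\ref{t:Mext} applies; both conditions hold for all sufficiently large $n$, so the corollary follows with no further work.
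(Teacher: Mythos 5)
Your proposal is correct and matches the paper's own argument exactly: the corollary is obtained by feeding the Meyniel-extremal family of Theorem~\ref{t:Mext} (so $\alpha=1/2$, $b=1/\sqrt{8}$) into Theorem~\ref{mainhighth} and computing $1/(2-\alpha)=2/3$. Your extra remarks about the $n\geq 72$ threshold and the size of the embedded subgraphs $\tfrac12 n^{2/3}$ are harmless bookkeeping that the paper leaves implicit.
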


The proof of Theorem \ref{mainhighth} actually works for a variable value of $\alpha$ tending to one.
	More specifically, let $\alpha(x)$ be a continuous eventually non-decreasing function with $\frac{1}{2} \leq \alpha(x) \leq 1-\frac{\log\log x}{\log x}$ such that there exist connected graphs of order $n$ with cop number $\Omega(n^{\alpha(n)})$. Then there exist connected graphs on $n$ vertices with cop throttling number at least $\Omega\lp \ell(n)\rp$ where $\ell(n)\in [1,n]$ is defined to be the solution to the equation
	\[
	x=\frac 12 n^{1/(2-\alpha(x))}.
	\]

It is interesting to consider the ratio of maximum throttling number to maximum cop number.  We define $mc(n)$ and $mt(n)$, respectively, as the maximum cop number and maximum cop throttling number over all connected graphs of order $n$.

\begin{conj}\label{cj:mt/mc}
$\lim_{n \rightarrow \infty} \frac{mt(n)}{mc(n)} = \infty$.
\end{conj}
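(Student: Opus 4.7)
The plan is to combine the lower bound $mt(n) = \Omega(n^{2/3})$ established in Corollary \ref{omega23} with known or hypothetical upper bounds on $mc(n)$. Under Meyniel's conjecture, $mc(n) = O(\sqrt{n})$, and thus the ratio $mt(n)/mc(n)$ would be $\Omega(n^{1/6})$, which tends to infinity and settles Conjecture \ref{cj:mt/mc}. More generally, any polynomial improvement on the cop-number upper bound to $mc(n) = O(n^{2/3-\epsilon})$ for some fixed $\epsilon > 0$ would suffice.

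For an unconditional approach, I would invoke Theorem \ref{mainhighth} as an amplifier. Suppose one could show $mc(n) = \Omega(n^\alpha)$ for some $\alpha \in [\tfrac{1}{2},1)$ (the $\alpha = \tfrac{1}{2}$ case being Theorem \ref{t:Mext}); then Theorem \ref{mainhighth} yields $mt(n) = \Omega\!\lp n^{1/(2-\alpha)}\rp$. Since $\tfrac{1}{2-\alpha} > \alpha$ whenever $\alpha < 1$, the ratio satisfies
\[
\frac{mt(n)}{mc(n)} \;\geq\; \Omega\!\lp n^{(1-\alpha)^2/(2-\alpha)}\rp\!,
\]
which tends to infinity so long as $\alpha$ is bounded away from $1$. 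Consequently, pairing this with any upper bound of the form $mc(n) = O(n^{1-\epsilon})$ for some fixed $\epsilon > 0$ would close the argument.

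The main obstacle is the regime in which $mc(n)$ might grow nearly linearly in $n$. Existing cop-number upper bounds (Frankl; Lu--Peng; Scott--Sudakov) give only $mc(n) = n^{1-o(1)}$, which is not of the required polynomial form. To circumvent this, I would attempt to iterate the spider construction of Theorem \ref{mainhighth}, replacing the gadget subgraphs $T_i$ at the ends of the legs by copies of previously constructed high-throttling graphs rather than by graphs achieving maximum cop number. The delicate step is to generalize the evasion argument inside the gadget: instead of relying on the robber's ability to indefinitely evade fewer than $c(T_i)$ cops, one must quantify $\capt_{s'}(T_i)$ when $s' < \thc(T_i)$, exploiting the fact that any number of cops below the throttling-optimal count forces a correspondingly long capture time within the gadget. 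If such a bound can be carried through a nested construction with careful tracking of the parameters (leg length, number of legs, gadget size, and optimal cop split), then each iteration should strictly amplify the throttling-to-cop-number ratio. The hard part is arguing that this amplification persists across infinitely many levels, so that $mt(n)/mc(n) \to \infty$ independent of the actual growth rate of $mc(n)$.
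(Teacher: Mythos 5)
This statement is a \emph{conjecture} in the paper: the authors do not prove it, and your proposal does not prove it either. The only thing the paper offers is the conditional remark immediately following the conjecture, namely that it would follow from Theorem~\ref{mainhighth} \emph{if} one could show $mc(n)=\Theta(n^{\alpha})$ for some fixed $\alpha<1$, since then $mt(n)=\Omega\lp n^{1/(2-\alpha)}\rp$ and $\frac{1}{2-\alpha}>\alpha$ for $\frac12\le\alpha<1$. The first two paragraphs of your proposal reproduce exactly this conditional observation, so there is nothing new there; but note one imprecision: your claim that a lower bound $mc(n)=\Omega(n^{\alpha})$ paired with \emph{any} upper bound $mc(n)=O(n^{1-\epsilon})$ closes the argument is wrong as stated. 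The ratio bound $\Omega\lp n^{(1-\alpha)^2/(2-\alpha)}\rp$ requires the upper and lower exponents to match, i.e.\ $mc(n)=\Theta(n^{\alpha})$. With mismatched exponents the argument fails: for example, the known facts $mc(n)=\Omega(n^{1/2})$ and a hypothetical bound $mc(n)=O(n^{0.9})$ give only $mt(n)/mc(n)=\Omega(n^{2/3})/O(n^{0.9})$, which is consistent with the ratio tending to $0$.

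The third paragraph, which is the only part aimed at an unconditional proof, is a research program rather than an argument. The crucial step --- a quantitative lower bound on $\capt_{s'}(T_i)$ when the gadget $T_i$ is a high-throttling graph rather than a high-cop-number graph and $s'<\thc(T_i)$ --- is not supplied, and it is not clear it can be: Theorem~\ref{mainhighth} relies on the robber \emph{indefinitely} evading fewer than $c(T_i)$ cops inside the gadget until reinforcements traverse a long leg, whereas a sub-throttling number of cops in a high-throttling gadget may still capture quickly (only the sum $k+\capt_k$ is controlled, not $\capt_k$ for each $k$ separately). You also give no mechanism for showing the amplification persists across infinitely many nesting levels independently of the actual growth rate of $mc(n)$; you acknowledge this yourself. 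So the proposal leaves the conjecture exactly where the paper leaves it: open, with the same conditional reduction to a $\Theta(n^{\alpha})$ determination of the maximum cop number.
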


Conjecture \ref{cj:mt/mc} would follow from Theorem \ref{mainhighth} if it is proven that the maximum possible cop number of a connected graph is  $\Theta(n^{\alpha})$ for some fixed $\alpha < 1$ (Meyniel extremal families imply $\alpha \ge \frac 1 2$):  Suppose $mc(G)=\Theta(n^{\alpha})$ for graphs of order $n$, which implies there is a family of graphs $G_n$ of order $n$ such that $c(G_n)=\Theta(n^\alpha)$.   Then, by Theorem \ref{mainhighth},  
$\thc(G_n)=\Omega\lp n^{1/(2-\alpha)}\rp$, and $\frac 1{2-\alpha}>\alpha$ for  $\frac 1 2\le \alpha  <1$. 

The final application of Theorem \ref{mainhighth} we will mention is that it opens a new way to attack Meyniel's conjecture. Indeed, if it can be shown that cop throttling numbers are $O(n^{2/3})$, this would suffice to show that cop numbers are $O(\sqrt{n})$.

%%%%%%%%%%%%%%%%%%%%%%%%%%%%%%%%%%%%%%%%%%%

\subsection{Sublinear upper bound on the cop throttling number}

We begin with some definitions and lemmas used to establish a sublinear bound on the cop throttling number.
Given a connected graph $G$, a $u$-$v$ \emph{geodesic} is a shortest path between vertices $u$ and $v$. A {\em geodesic} is a path that is a $u$-$v$ geodesic for some choice of $u$ and $v$. Observe that a geodesic  is a retract and an induced subgraph. % and we use the robber's shadow on $P$.

An induced subgraph $H$ of $G$ is \emph{$k$-guardable} if after finitely many moves, $k$ cops can arrange themselves in $H$ so that the robber is immediately captured upon entering $H$. For example, a clique is $1$-guardable. After some round, we  say a $k$-guardable subgraph $H$ is \emph{guarded} if for the rest of the game, some set of cops in $H$ stay in position to immediately capture the robber upon entering $H$.

  %For a $u$-$v$ geodesic $P$ of length $\ell$, define the robber's \emph{shadow on $P$} as $v$ if the distance between $u$ and the robber's current position is at least $\ell$, and otherwise let the shadow be the vertex on $P$ with the same distance to $u$ as the robber.   A cop can guard $P$ by staying on the robber's shadow \cite{mohar}.  Observe that a  $u$-$v$ geodesic $P$ of length $\ell$ is a retract, and the robber's {shadow on $P$} as $v$ if the distance between $u$ and the robber's current position is at least $\ell$, and otherwise let the shadow be the vertex on $P$ with the same distance to $u$ as the robber.   A cop can guard $P$ by staying on the robber's shadow \cite{mohar}.

\begin{lem}\label{lemma guard path exact}
If $P$ is a geodesic of length $k$, then for any $r\geq 1$, we can place $\left\lceil\frac{k+1}{2r+1}\right\rceil$ cops on $P$ such that $P$ will be guarded in at most $r$ steps. Further, after these $r$ steps, only one cop is necessary to continue guarding $P$.
\end{lem}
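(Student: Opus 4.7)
The plan is to space cops at the centers of equal-sized blocks along $P$ and let the cops on either side of the robber's shadow close in; a simple ``sum of distances'' bookkeeping then shows that some cop reaches the shadow within $r$ rounds, after which the classical one-cop retraction strategy maintains the guard forever, freeing the remaining cops.

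Label $P$ as $v_0 v_1 \cdots v_k$ and let $m = \left\lceil \frac{k+1}{2r+1}\right\rceil$. I partition $V(P)$ into $m$ consecutive blocks of size at most $2r+1$ and place cop $i$ at position $c_i = (i-1)(2r+1) + r$ for $i = 1,\ldots,m$, truncating the rightmost cop's position to lie in $P$ if necessary. Every vertex of $P$ is then within distance $r$ of some cop, and consecutive interior cops are at path-distance exactly $2r+1$. Since $P$ is a geodesic it is a retract of $G$ under some retraction $\varphi$, so the robber's shadow $s_t := \varphi(R_t) \in V(P)$ changes by at most one edge per round.

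For the first $r$ rounds, use the following closing-in strategy: in each round, the cop immediately to the left of $s_t$ on $P$ moves one step right, the cop immediately to the right of $s_t$ moves one step left, a cop already on $s_t$ stays, and all other cops stay put; if the shadow lies outside $[c_1,c_m]$ on $P$, only the extreme cop on that side moves, one step toward the shadow. The key calculation is the interior case. If at time $t$ the shadow lies strictly between two cops at positions $a_t < b_t$ (originally $c_i, c_{i+1}$), then because each of these cops moves inward by $1$ per round and the shadow's $\pm 1$ displacement contributes equally in magnitude but oppositely in sign to $d_L^t := s_t - a_t$ and $d_R^t := b_t - s_t$, the sum $d_L^t + d_R^t = 2r+1 - 2t$ is invariant so long as neither cop has reached the shadow. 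At $t = r$ this sum equals $1$, and since $d_L^r$ and $d_R^r$ are non-negative integers one of them must be $0$ — i.e., a cop occupies the shadow. The boundary case is analogous: the lone extreme cop chases a shadow that can flee outward only until it reaches the corresponding endpoint of $P$, after which the gap shrinks by at least $1$ per round, so since $c_1 = r$ (and $k-c_m \le r$), the cop reaches the shadow in at most $r$ rounds.

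From round $r$ onwards, that single cop maintains its position on the shadow indefinitely via the standard one-cop retraction strategy on the geodesic $P$, so only one cop is necessary thereafter and the remaining $m-1$ cops are released. The main delicate point is verifying the invariant $d_L^t + d_R^t = 2r+1-2t$ under any adversarial shadow trajectory; this is immediate because the two cops' moves contribute $-2$ to the sum while the shadow's $\pm 1$ move contributes $0$ to it.
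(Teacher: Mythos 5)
Your proof is correct and follows essentially the same strategy as the paper: the same evenly spaced placement of $\left\lceil\frac{k+1}{2r+1}\right\rceil$ cops at the centers of blocks of $2r+1$ vertices, the same reduction to catching the robber's shadow on the retract $P$, and the same observation that one cop then rides the shadow forever. The only difference is that where the paper simply cites the result of Bonato, P\'erez-Gim\'enez, Pra\l at, and Reiniger that $\capt_j(P)=\rad_j(P)$ for paths to conclude the shadow is caught within $r$ rounds, you prove that step from scratch with the two flanking cops closing in and the invariant $d_L^t+d_R^t=2r+1-2t$ (together with the observation that the shadow cannot pass a cop without being captured, which keeps both distances non-negative and the flanking pair fixed). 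This makes your argument self-contained at the cost of a little bookkeeping; both routes are sound, and your boundary analysis near the endpoints of $P$ is also handled correctly.
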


\begin{proof}
It suffices for the cops to capture the robber's shadow on $P$  and for the cop that captures the shadow to stay on it. %\cite{NW83}.
By \cite{BPPR17}, $\capt_{\left\lceil\frac{k+1}{2r+1}\right\rceil}(P)=\rad_{\left\lceil\frac{k+1}{2r+1}\right\rceil}(P) = r$, so the robber's shadow is caught in at most $r$ steps. Note that if $P=(v_1,v_2,\dots,v_{k+1})$, we can place one cop at $v_{r+1+(2r+1)j}$ for each $0\leq j\leq \left\lceil\frac{k+1}{2r+1}\right\rceil-1$, so that every vertex on $P$ is within distance $r$ from some cop.
\end{proof}

It is straightforward to see that Lemma~\ref{lemma guard path exact} is sharp since if only $\left\lceil\frac{k+1}{2r+1}\right\rceil-1$ cops are placed on a path with $k+1$ vertices, there will be a vertex at distance at least $r+1$ from every cop. This level of precision has a negligible effect on the proof of Theorem~\ref{main}, however, so we state an immediate corollary of this result that is weaker but easier to use.

\begin{lem}\label{lemma guard path}
If $P$ is a geodesic of length $r\ell$ for some integers $r,\ell\geq 1$, then we can place $\ell$ cops on $P$ such that $P$ will be guarded in at most $r$ steps, and after these $r$ steps, only one cop is necessary to continue guarding $P$.
\end{lem}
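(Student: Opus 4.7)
The plan is to derive this lemma directly from Lemma~\ref{lemma guard path exact} by a short counting comparison. I would set $k=r\ell$ in Lemma~\ref{lemma guard path exact} and verify that the exact cop count it guarantees, namely $\left\lceil\frac{r\ell+1}{2r+1}\right\rceil$, does not exceed $\ell$. Concretely, I would observe that the inequality $\frac{r\ell+1}{2r+1}\le \ell$ is equivalent to $r\ell+1\le \ell(2r+1)$, i.e.\ $1\le \ell(r+1)$, which holds for all integers $r,\ell\ge 1$. Taking ceilings preserves the inequality.

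Next, I would explain that because the lemma weakens the count from the sharp value to $\ell$, the cop team can simply use the placement prescribed by Lemma~\ref{lemma guard path exact} and then distribute the remaining (at most $\ell - \left\lceil\frac{r\ell+1}{2r+1}\right\rceil$) cops at arbitrary vertices of $P$; extra cops cannot interfere with the guarding strategy, since the strategy only requires the designated cops to pursue the robber's shadow on $P$ and the extra cops may be ignored. The conclusion that $P$ is guarded within $r$ steps is then immediate from Lemma~\ref{lemma guard path exact}.

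Finally, the statement that a single cop suffices to continue guarding after the initial $r$ rounds is inherited verbatim from Lemma~\ref{lemma guard path exact}: once the shadow of the robber on $P$ has been captured, that single cop maintains the shadow indefinitely (since a geodesic is a retract of $G$, the shadow is well-defined and moves at most one step per round), so all other cops may be released from their guarding duty.

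I do not anticipate any genuine obstacle here; the only care needed is the routine arithmetic check that $\ell(r+1)\ge 1$ implies $\left\lceil\frac{r\ell+1}{2r+1}\right\rceil\le \ell$, and a clean remark that extra cops do not disrupt the guarding strategy from the sharper lemma.
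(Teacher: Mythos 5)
Your proposal is correct and matches the paper's proof, which likewise derives the statement directly from Lemma~\ref{lemma guard path exact} together with the inequality $\left\lceil\frac{r\ell+1}{2r+1}\right\rceil\leq \ell$ for all $r,\ell\geq 1$; your algebraic verification of that inequality and the remark about surplus cops are just a fuller write-up of the same one-line argument.
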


\begin{proof}
The proof follows immediately from Lemma~\ref{lemma guard path exact}, and the fact that $\left\lceil\frac{r\ell+1}{2r+1}\right\rceil\leq \ell$ for all $r,\ell\geq 1$.
\end{proof}

Let $W=W(x)$ be the \emph{Lambert W function} or \emph{product-log function}, which is the inverse of $y=xe^x$ ($xe^x$ here will be restricted to the domain $x\geq 0$, on which $xe^x$ is injective, so $W$ is well-defined).
  We now arrive at the main result of this section, which provides a sublinear bound on the cop throttling number of a graph.

\begin{thm}\label{main}
If $G$ is a connected graph on $n$ vertices, then
	\[
	\thc(G)\leq \frac{(2+o(1))n\sqrt{W(\log n)}}{\sqrt{\log n}}.
	\]
\end{thm}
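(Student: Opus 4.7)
My plan is to design a cop strategy by iterating Lemma~\ref{lemma guard path} to guard a sequence of geodesics in the shrinking subgraph to which the robber is confined, and then to optimize the resulting trade-off between the number of cops and the capture time.

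Concretely, I would fix parameters $r$ (the per-phase capture time) and $\ell$ (the number of active cops per phase) and work in phases. Set $G_0=G$, and at phase $i$ let $G_i$ denote the subgraph in which the robber is confined. While $\diam(G_i)\geq r\ell$, I pick a geodesic $P_i$ of $G_i$ of length $r\ell$, apply Lemma~\ref{lemma guard path} to place $\ell$ cops on $P_i$ so that $P_i$ is guarded within at most $r$ rounds, and then let one cop stay to maintain the guard on $P_i$ while the other $\ell-1$ cops become available for subsequent phases. The robber is thereafter confined to a connected component of $G_i\setminus V(P_i)$, which becomes $G_{i+1}$, so $|V(G_{i+1})| \leq |V(G_i)| - r\ell - 1$. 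After $m$ phases the elapsed capture time is at most $mr$, and the number of cops simultaneously in play is at most $m+\ell$ (up to $m$ maintenance cops plus $\ell$ active cops); since each phase removes at least $r\ell+1$ vertices, $m\leq n/(r\ell+1)$.

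The loop terminates either because $G_m$ is empty or because $\diam(G_m)<r\ell$ and no sufficiently long geodesic exists. In the latter case I would fall back to placing a cop on every vertex of $G_m$, using $|V(G_m)|$ extra cops and no extra rounds, yielding an overall bound of the form $\thc(G) \leq mr + m + \ell + |V(G_m)|$. Setting the dominant terms equal and using a growth estimate on $|V(G_m)|$ in terms of $\diam(G_m)$ (for instance, a Moore-type inequality or an iterative BFS-layer covering applied to small-diameter subgraphs) leads to an optimization equation of the form $x\cdot e^x \sim \log n$, whose solution is $x\sim W(\log n)$. Substituting back produces the claimed bound $\thc(G) \leq (2+o(1))\,n\sqrt{W(\log n)/\log n}$.

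The main obstacle is the fallback estimate at the end: controlling $|V(G_m)|$ once the diameter of the residual subgraph drops below $r\ell$, without losing the sublinear savings obtained from the geodesic-guarding phases. This is exactly where the Lambert-$W$ expression emerges, because the number of vertices in a graph of diameter $D$ grows essentially exponentially in $D$, and $W$ is the natural inverse of $x\mapsto xe^x$. I expect the proof to combine a bound of this exponential-growth flavor with the phase-based accounting above, and to choose $r$ and $\ell$ together to equate the two regimes.
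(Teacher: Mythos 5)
Your high-level plan --- iteratively guard geodesics via Lemma~\ref{lemma guard path}, balance the cop count against the guarding time, and let the Lambert $W$ function emerge from the exponential growth of small-diameter graphs --- is the right skeleton, and it matches the spirit of the paper's argument. But there are two genuine gaps. First, your decomposition never controls the maximum degree of the residual graph, so the ``Moore-type inequality'' you hope to invoke at the end is unavailable: a graph of diameter less than $r\ell$ can have $\Omega(n)$ vertices (take $G=K_{1,n-1}$, where your loop never even starts), so the fallback of placing a cop on every vertex of $G_m$ costs $\Omega(n)$ cops and destroys the bound. The paper inserts an intermediate stage that repeatedly removes the closed neighborhood of any vertex of degree at least $\tau$ (one cop guards each such star), which is exactly what makes the Moore bound applicable to the final components. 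Second, even granting that each final component is small, your endgame does not deliver enough cops \emph{to the component the robber is actually in}: $G_m$ may consist of many small components whose total order is linear, so you cannot afford to dominate all of them in the initial placement, and cops recycled from distant paths need travel time you have not accounted for. The paper resolves this with a two-scale decomposition: it first extracts long geodesics of length $\beta\tau$ carrying $\beta$ cops each, guaranteeing that every vertex of the residual graph is within distance $\beta\tau$ of a reservoir of $\beta$ cops, of which $\beta-1$ can be dispatched at the end to dominate the robber's component in $O(\beta\tau)$ extra rounds; the balancing equation $\tau^{2\tau^2}=n$ (equivalently $\tau=\sqrt{\log n/W(\log n)}$) is precisely what makes this dispatch cost a lower-order term.

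A smaller but real issue is your cop accounting. In Cops and Robbers all cops are placed before the robber chooses a vertex, so either you pre-place $\ell$ cops on each of the $m$ paths (giving $m\ell\approx n/r$ cops, not $m+\ell$), or you recycle $\ell-1$ cops between phases, in which case each transfer can take up to $\diam(G)$ rounds and the total time blows up. The paper takes the first option: all cops are placed at the start, the count $n/\tau$ follows because each cop covers at least $\tau$ vertices, and the paths are then guarded one at a time in order (which also handles the subtlety that $P_i$ is a geodesic of $G_i$ but not necessarily of $G$, hence only $1$-guardable once the robber is confined to $G_i$).
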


\medskip

\bpf %\begin{proof}[Proof of Theorem~\ref{main}]
Let $\tau=\sqrt{\frac{\log n}{W(\log n)}}$ and $\beta=\tau^{\tau^2}$. Let $G$ be a connected graph on $n$ vertices. First, let us consider the case where $\mathrm{diam}(G)\geq \beta\tau$.
	
We will describe how to place cops on $G$ via a recursive algorithm that decomposes $G$ into paths of length $\beta\tau$, stars, paths of length $\tau^2$, and small connected subgraphs. The paths and stars will be guarded with cops, and then we will show that there are enough free cops close to the small connected subgraphs to quickly catch the robber in any of these connected subgraphs.
	
Let $G_1=G$ and let $P_1$ be a geodesic in $G_1$ of length $\beta\tau$. Place $\beta$ cops along $P_1$ according to Lemma~\ref{lemma guard path} to guarantee $P_1$ can be guarded in $\tau$ steps. Let $G_2$ be the graph induced by $V(G_1)\setminus V(P_1)$. Now, recursively for as long as we can, let $P_i$ be a geodesic in $G_i$ of length $\beta\tau$. Place $\beta$ cops along $P_i$ according to Lemma~\ref{lemma guard path}, and let $G_{i+1}$ be the induced subgraph on $V(G_i)\setminus V(P_i)$. We can continue for, say $\ell_1$ steps, until every component in $G_{\ell_1}$ has diameter less than $\beta\tau$. Note that every vertex in $V(G_{\ell_1})$ is distance at most $\beta\tau$ from some path $P_i$.
	
We describe how to cover any large stars in $G_{\ell_1}$. Recursively, for $i\geq \ell_1$, let $v_i$ be a vertex of degree at least $\tau$ in a component of $G_i$. Place a cop at $v_i$ to guard the closed neighborhood $N_{G_i}[v_i]$, and let $G_{i+1}$ be the subgraph induced on $V(G_i)\setminus N_{G_i}[v_i]$. We can continue this until we reach some $G_{\ell_2}$ with $\Delta(G_{\ell_2})<\tau$.
	
We now will find paths of length $\tau^2$. Recursively for $i\geq \ell_2$, let $P_i$ be a geodesic in $G_i$ of length $\tau^2$. We will place $\tau$ cops on $P_i$ according to Lemma~\ref{lemma guard path} so that $P_i$ can be guarded in at most $\tau$ steps. Let $G_{i+1}$ be the induced subgraph on $V(G_i)\setminus V(P_i)$. We can continue this process until we reach some graph $G_{\ell_3}$, such that every component has diameter less than $\tau^2$. This completes the initial placement of the cops. Note that each cop covered at least $\tau$ vertices on average, so the total number of cops used is at most $\frac n \tau$. Now we will describe how to move the cops to capture the robber quickly.
	
We will guard each of the paths $P_i$ one at a time in order using the cops that were placed on the paths. It is worth noting that $P_i$ may not be a geodesic in $G$, and so $P_i$ may not initially be guardable by a single cop. Once all the vertices in $V(G)\setminus V(G_i)$ are guarded though, the robber is forced to play on $G_i$, in which $P_i$ is a geodesic, and thus, $1$-guardable.

By Lemma~\ref{lemma guard path}, each path takes at most $\tau$ steps to guard. Since each path is of length at least $\tau^2$, there are at most $\frac n {\tau^2}$ paths, so this takes at most $\frac{n}{\tau^2}\cdot \tau=\frac n \tau$ rounds. Once each path has been guarded, if the robber has not been caught yet, the robber must be in a component of $G_{\ell_3}$.
	
By the Moore bound (see e.g. \cite{MooreBound}), since $\Delta(G_{\ell_3})<\tau$ and the diameter of every component of $G_{\ell_3}$ is less than $\tau^2$, each component of $G_{\ell_3}$ has order at most $s$, where
	\bea	s&=&1+\sum_{i=1}^{\tau^2}\tau(\tau-1)^{i-1}\\
	&=& o(\tau^{\tau^2})<2\beta-2.
	\eea
Since the domination number of a component with $s$ vertices is at most $\frac s 2$, $\beta-1>\frac s 2$ cops can guard whichever component the robber ends up in. By construction, there is a path of length $\beta\tau$ with $\beta$ cops within distance $\beta\tau$ of every vertex in this component. By Lemma~\ref{lemma guard path}, only one cop need remain on each path to keep them guarded, so the $\beta-1$ other cops on this path can then guard the component containing the robber. This takes at most $2\beta\tau+1$ more steps and the robber is caught. Note that $\tau\cdot (2\beta\tau+1)=o(\tau^{2\tau^2})$ and
	\bea %\tau(2\beta\tau+1)&\ll &\tau^{2\tau^2}\\
	\tau^{2\tau^2}&=&\left(\sqrt{\frac{\log n}{W(\log n)}}\right)^{\left(\frac{2\log n}{W(\log n)}\right)}\\
	&=&\left(\sqrt{\frac{W(\log n)\exp(W(\log n))}{W(\log n)}}\right)^{\left(\frac{2\log n}{W(\log n)}\right)}\\
	&=&\exp\left(\frac{1}2~W(\log n)\frac{2\log n}{W(\log n)}\right)\\
	&=&n,
	\eea
so $2\beta\tau+1=o(\frac n \tau)$. Hence, the total number of rounds it takes to capture the robber with $\frac n \tau$ cops is at most $(1+o(1))\frac n \tau$, completing the proof of this case.
	
If the diameter of $G$ is less than $\beta\tau$, then we proceed identically as in the first case, except we do not need to look for geodesics of length $\beta\tau$, and instead proceed immediately to covering large stars, and then geodesics of length $\tau^2$. We will again arrive at a graph with components that have small maximum degree and small diameter, and therefore, have order at most $s$. We then arbitrarily choose a vertex to place $\beta$ cops, and this uses at most $\frac n \tau+\beta$ cops in all. We move cops identically to the previous case, guarding all the paths in at most $\frac n \tau$ rounds, and then the $\beta>\frac s 2$ cops placed arbitrarily can then move to and guard whichever component the robber is in after at most $\beta\tau$ more steps due to the small diameter of the original graph. Since $\beta\tau+\beta=o\lp\frac{\tau^{2\tau^2}}\tau\rp=o\lp\frac n \tau\rp$, adding these together gives a bound of $(2+o(1))\frac n \tau$, finishing the proof.
\end{proof}

The following bound on the cop throttling number is slightly worse than the one in Theorem~\ref{main}, but it uses only elementary functions.

\begin{cor}
		If $G$ is a connected graph on $n$ vertices, then
	\[
	\thc(G)\leq \frac{n}{(\log n)^{1/2-o(1)}}.
	\]
\end{cor}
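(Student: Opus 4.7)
The plan is to deduce the corollary from Theorem~\ref{main} by showing that the factor $\sqrt{W(\log n)}$ appearing there is negligible in comparison with any fixed positive power of $\log n$. Concretely, I will show that $\sqrt{W(\log n)} = (\log n)^{o(1)}$, which allows me to rewrite the bound in Theorem~\ref{main} using only elementary functions.

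First I would recall the standard asymptotic expansion of the Lambert $W$ function on the positive real axis, namely $W(x) = \log x - \log\log x + o(1)$ as $x\to\infty$. Applying this with $x = \log n$ gives $W(\log n) = \log\log n - \log\log\log n + o(1) = O(\log\log n)$. In particular $\sqrt{W(\log n)} = O(\sqrt{\log\log n})$.

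Next I would verify that $\sqrt{\log\log n} = (\log n)^{o(1)}$. This is immediate: for any fixed $\varepsilon > 0$, one has $(\log n)^{\varepsilon} = \exp(\varepsilon \log\log n)$ while $\sqrt{\log\log n} = \exp(\tfrac12 \log\log\log n)$, and $\tfrac12 \log\log\log n / (\varepsilon \log\log n) \to 0$ as $n\to\infty$, so $\sqrt{\log\log n} < (\log n)^{\varepsilon}$ eventually. Combining the last two steps, $\sqrt{W(\log n)} = (\log n)^{o(1)}$.

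Finally, I would substitute into Theorem~\ref{main}. The constant factor $2 + o(1)$ is itself $(\log n)^{o(1)}$, so
\[
\thc(G) \le \frac{(2+o(1))\, n\, \sqrt{W(\log n)}}{\sqrt{\log n}} \le \frac{n \cdot (\log n)^{o(1)}}{(\log n)^{1/2}} = \frac{n}{(\log n)^{1/2 - o(1)}},
\]
as required. No step is a real obstacle here: the only point that requires any care is the asymptotic expansion of $W$, which is classical, and the bookkeeping that bundles all the slowly growing factors into the $o(1)$ in the exponent of $\log n$.
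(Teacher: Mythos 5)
Your proof is correct, and it reaches the same key estimate as the paper --- namely that $\sqrt{W(\log n)}=(\log n)^{o(1)}$ --- but by a different mechanism. The paper never invokes the asymptotic expansion of $W$; instead it exhibits the explicit subpolynomial function $(\log x)^{\frac{\log\log\log x}{\log\log x}}$ and shows it is $\omega\bigl(\sqrt{W(\log x)}\bigr)$ by inverting both expressions (writing $x=\exp(e^y)$ on one side and $x=\exp(z^2e^{z^2})$ on the other and comparing growth rates), so the argument is self-contained modulo the defining relation $W(t)e^{W(t)}=t$. You instead quote the classical expansion $W(t)=\log t-\log\log t+o(1)$ to get the cleaner intermediate bound $W(\log n)=O(\log\log n)$, from which $(\log n)^{o(1)}$ is immediate. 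Your route is shorter and more transparent, at the cost of citing an external asymptotic fact; note that you could avoid even that by observing directly from the definition that $W(t)\le \log t$ for $t\ge e$ (since $(\log t)e^{\log t}=t\log t\ge t$ and $se^s$ is increasing), which yields $W(\log n)\le\log\log n$ with no expansion needed. Either way the bookkeeping in your final display is sound, and the conclusion matches the paper's.
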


\begin{proof}
	We claim that $(\log x)^{\frac{\log\log\log x}{\log\log x}}=\omega\lp \sqrt{W(\log x)}\rp$. Since $\frac {\log\log\log x}{\log\log x}=o(1)$, the result will follow from Theorem~\ref{main}.

If $y=(\log x)^{\frac{\log\log\log x}{\log\log x}}$, then $\log y=\log\log\log x$. Hence, $x=\exp(e^y)$. If $z=\sqrt{W(\log x)}$, then $z^2=W(\log x)$, so $z^2e^{z^2}=\log x$, and finally $x=\exp (z^2e^{z^2})$. It is evident that $\exp(e^x)=o\lp\exp\lp x^2e^{x^2}\rp\rp$, and so we have that $(\log x)^{\frac{\log\log\log x}{\log\log x}}=\omega\lp \sqrt{W(\log x)}\rp$.
\end{proof}

Theorem \ref{main} bounds the throttling number for all graphs, but we can provide much stronger bounds for more restricted classes of graphs. We note that by using the same method as in the proof of sublinear cop numbers for connected graphs with bounded diameter in \cite{ss1004}, it is easy to see that if $G$ is a graph with diameter at most $\frac {2^{\sqrt{\log{n}}}}{\log^{3}{n}}$, then $\thc(G) = O\lp n (\log{n})^3 2^{-\sqrt{\log{n}}}\rp$. 

We finish this section with a technical lemma that bounds the throttling number for graphs that are obtained from smaller graphs by adding large stars. The result could also be used to provide improvements to a general upper bound on $\thc(G)$ in the future. 
For a connected graph $G$ of order $n$, let $S(G)$ denote the family of all  connected graphs that have the disjoint union $G\du K_{1,s}$ as a spanning subgraph for some choice of $s$, in which the copy of $G$ is induced. 

\begin{lem}\label{lemma add a large star}
	Fix $\alpha\in (0,1)$ and a positive real number $k$.  Let  $G$ be a connected graph of order $n$ with $\thc(G)\leq k n^{1-\alpha}$. If $G'\in S(G)$ is a graph of order $t$ with $t-\frac{t^\alpha}{k(1-\alpha)} >n$, then $\thc(G')\leq k t^{1-\alpha}$.
\end{lem}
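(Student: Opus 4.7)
The plan is to place an optimal throttling configuration for $G$ inside the induced copy of $G$ in $G'$ and add exactly one extra cop at the center of the star, then show that this added cop pins the robber into $V(G)$ without disturbing the $G$-strategy. Explicitly, write the spanning subgraph of $G'$ as $G\du K_{1,s}$ with $s=t-n-1$, let $c$ denote the center of the star, and choose a capture set $S_0\subseteq V(G)$ witnessing $\thc(G;S_0)=\thc(G)\le kn^{1-\alpha}$. I would deploy the multiset $S=S_0\cup\{c\}$ on $G'$, instructing the cop at $c$ never to move and the cops in $S_0$ to run the $G$-strategy.

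The heart of the argument is that this arrangement captures the robber on $G'$ in at most $\capt(G;S_0)$ rounds. Every leaf of the star as well as $c$ itself lies in $N_{G'}[c]$, as does every $G$-neighbor of $c$, so an end-of-round robber position in $N_{G'}[c]$ is captured on the following cop move by the sentinel at $c$. Consequently an optimal robber's end-of-round positions all lie in $W:=V(G)\setminus N_{G'}[c]$. Since $G$ is induced in $G'$, every edge of $G'$ joining two vertices of $W$ lies in $E(G)$; the robber's effective game is therefore a restriction of the game on $G$ against the cops in $S_0$, and any detour to a star vertex is a one-step suicide. Thus the $G$-strategy succeeds on $G'$ in at most $\capt(G;S_0)$ rounds, yielding
\[
\thc(G')\le|S|+\capt(G';S)\le(|S_0|+1)+\capt(G;S_0)=\thc(G)+1\le kn^{1-\alpha}+1.
\]

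The remaining task is the elementary inequality $kn^{1-\alpha}+1\le kt^{1-\alpha}$, equivalently $k(t^{1-\alpha}-n^{1-\alpha})\ge 1$. Applying the mean value theorem to $x\mapsto x^{1-\alpha}$ on $[n,t]$ produces some $\xi\in(n,t)$ with $t^{1-\alpha}-n^{1-\alpha}=(1-\alpha)\xi^{-\alpha}(t-n)>(1-\alpha)t^{-\alpha}(t-n)$, using $\xi<t$. The hypothesis $t-\tfrac{t^\alpha}{k(1-\alpha)}>n$ rewrites as $\tfrac{t-n}{t^\alpha}>\tfrac{1}{k(1-\alpha)}$, so multiplying gives $k(t^{1-\alpha}-n^{1-\alpha})>1$, as required. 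The principal obstacle is the shadow-style claim in the second paragraph: verifying that the lone sentinel at $c$ really does sterilize $N_{G'}[c]$ and that the induced restriction of the robber's options lifts back to a valid play on $G$ against $S_0$ with no loss in capture time. Once that is cleanly stated, the calculus step is routine.
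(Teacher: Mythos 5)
Your proof is correct and follows essentially the same route as the paper's: one cop stationed at the star's center guards the star, giving $\thc(G')\le 1+\thc(G)$, and the mean value theorem applied to $x\mapsto kx^{1-\alpha}$ (with the derivative bounded below by its value at $t$) converts the hypothesis $t-\frac{t^\alpha}{k(1-\alpha)}>n$ into $kt^{1-\alpha}\ge 1+kn^{1-\alpha}$. The only difference is cosmetic: you spell out the guarding argument that the paper compresses into the single sentence ``this star can be guarded by one cop.''
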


\begin{proof}
	Let $f(x)=k x^{1-\alpha}$, and note that $f'(x)=k(1-\alpha) x^{-\alpha}$, which is decreasing for all $x\geq 1$.  We will show that every graph $G'$ in $S(G)$ of order $t$ has $\thc(G')\leq f(t)$. By definition, $G'$ contains a star $T$ of order at least $\frac{t^{\alpha}}{k(1-\alpha)}$ such that $G$ is the graph obtained from $G'$ by removing $T$. Since this star can be guarded by one cop,
	\begin{equation}\label{equation thc HD 1}
	\thc(G')\leq 1+\thc(G)\leq 1+f(n).
	\end{equation}
	By the mean value theorem and the fact that $f'$ is a decreasing function, we have
	\begin{equation}\label{equation thc HD 2}
	f(t)-f(n)\geq f'(t)\frac{t^{\alpha}}{k(1-\alpha)} = 1.
	\end{equation}
	Inequality \eqref{equation thc HD 2} implies that $1+f(n)\leq f(t)$, so along with inequality \eqref{equation thc HD 1}, the result holds. 
\end{proof}

 We note that a similar argument to the one above also works if we replace $n^{1-\alpha}$ with $\frac n{\log n}$ and other nice functions. Possibly the most important implication of Lemma \ref{lemma add a large star} is that it could be useful in proving a bound of the form $O(n^{1-\alpha})$ via induction. More precisely, in the inductive step, Lemma \ref{lemma add a large star} implies that one would only need to consider graphs in which every large star disconnects the graph, giving some structure to work with in a potential proof.

%%%%%%%%%%%%%%%%%%%%%%%%%%%%%%%%%%%%%%%%%%%

\subsection{Graphs with few cycles}\label{s:unicyclic}

In \cite{CRthrottle}, it was shown that %a tree of order $n$ has cop throttling number at most $2\sqrt{n}$ and that 
a unicyclic graph of order $n$ has cop throttling number at most $\sqrt 6\sqrt{n}$. %  (a graph is {\em unicyclic} if it has exactly one cycle).  
A corollary to the next result improves this unicyclic bound.
Let $f(G)$ denote the {\em vertex feedback number} of a graph $G$, i.e., the least number of vertices necessary to remove from $G$ in order to make the graph acyclic. 

\begin{prop}
A connected graph of order $n$ with vertex feedback number $f(G)$ has cop throttling number at most $2 \sqrt{n}+f(G)$.
\end{prop}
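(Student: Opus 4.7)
The plan is to reduce the Cops and Robbers game on $G$ to one on a forest, to which the tree throttling bound from \cite{CRthrottle} applies. Let $F \subseteq V(G)$ be a minimum feedback vertex set of $G$, so that $|F| = f(G)$ and the induced subgraph $\mathcal{F} = G - F$ is a forest on $n - f(G)$ vertices.

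The first step is to place one cop on each of the $f(G)$ vertices of $F$ and commit these cops to remaining stationary for the entire game. Since every vertex of $F$ is permanently occupied by a cop, the robber is captured the instant they attempt to move to any vertex in $F$; hence the robber is effectively confined to the induced subgraph $\mathcal{F}$, and any additional cops deployed in $\mathcal{F}$ need only move along the edges of $\mathcal{F}$, which are edges of $G$ since $\mathcal{F}$ is induced.

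The second step is to deploy additional cops in $\mathcal{F}$ according to a tree throttling strategy. The bound $\thc(T) \leq 2\sqrt{|V(T)|}$ for a tree $T$ from \cite{CRthrottle} extends to forests of order $m$: for a target capture time $t$, one allocates to each tree component $T_i$ roughly $|V(T_i)|/(2t)$ cops placed along a path decomposition of $T_i$, so that the total number of extra cops is at most $m/(2t)$; optimizing over $t$ yields a placement with $k + \capt_k(\mathcal{F}) \leq 2\sqrt{m} \leq 2\sqrt{n}$. Combining with the $f(G)$ stationary cops then gives $\thc(G) \leq f(G) + 2\sqrt{n}$.

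The main subtlety is handling components of $\mathcal{F}$ that are too small for the cop-to-size ratio above to be efficient (a naive tree-by-tree allocation would incur an additive term proportional to the number of components). Because $G$ is connected, every component of $\mathcal{F}$ contains a vertex adjacent to some vertex of $F$, so very small components have their boundary covered by the stationary cops on $F$ and can be cleared quickly without requiring any additional cops inside them; only the larger components need additional cops, and these fit within the bound $2\sqrt{n}$ via the tree throttling strategy.
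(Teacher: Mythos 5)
Your overall strategy --- station a cop permanently on each vertex of a feedback set $F$ and reduce to a throttling problem on what remains --- is the same idea as the paper's, but the way you execute the reduction has a genuine gap. The forest $\mathcal{F}=G-F$ is in general disconnected, and it can have very many components (up to $\Theta(n)$ of them, even when $f(G)=1$). The bound $\thc(T)\le 2\sqrt{|V(T)|}$ is a statement about a single connected tree; applied component by component it forces at least one cop into every component that could host the robber, since a component containing the robber and no cop is never cleared by stationary cops on its boundary. Your escape hatch --- that small cop-free components ``can be cleared quickly without requiring any additional cops inside them'' because their boundary in $F$ is guarded --- is not justified and is false as stated: a stationary cop never captures a robber who simply stays put inside the component; if the cop on an adjacent $F$-vertex $v$ abandons its post to go in, the robber may escape through $v$ into another component (or a cycle through $v$ reopens); and the induced subgraph on $V(T_j)\cup\{v\}$ need not even be cop-win (take $T_j=P_4$ with $v$ adjacent to both endpoints, giving $C_5$). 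So the step ``only the larger components need additional cops'' is exactly the point that requires a proof, and none is given.

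The paper avoids this issue entirely by never deleting the feedback vertices. Instead, for each $v_i\in F$ in turn it stations a cop on $v_i$ and deletes \emph{edges incident to} $v_i$ until $v_i$ lies on no cycle, always preserving connectivity; a cop parked on $v_i$ guarantees the robber never traverses a deleted edge, giving $\thc(G_i)\le 1+\thc(G_{i+1})$. After $f(G)$ rounds of this one arrives at a single connected spanning tree on all $n$ vertices, to which $\thc(T)\le 2\sqrt{n}$ applies directly, yielding $\thc(G)\le 2\sqrt{n}+f(G)$ with no component bookkeeping. If you want to salvage your vertex-deletion version, you would need an explicit argument for cop-free components (for instance, a designated extra cop per component, which is too expensive, or a careful shadow/guarding argument on $T_j$ together with its attachment vertices); switching to the edge-deletion reduction is the cleaner fix.
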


\begin{proof}
Let $G$ be a connected graph with vertex feedback number $f(G)$ and let $F$ be a set of vertices of cardinality $f(G)$ whose deletion produces an acyclic graph. Define $G_{0} = G$. We construct a sequence of graphs inductively until we reach a graph with no cycles. Given the graph $G_{i}$, pick any vertex $v_i \in F\setminus \{v_1, \dots v_{i-1}\}$ and station a single cop on $v_i$. Let $G_{i+1}$ be the graph of order $n$ obtained by deleting   edges adjacent to $v_i$  until $v_i$ is no longer in any cycle  in $G_i$. Note that this edge deletion process can be carried out so that $G_{i+1}$ is still connected.

For each $i \geq 0$, we have the inequality $\thc(G_{i}) \leq 1+\thc(G_{i+1})$. Since every cycle contains a vertex in $F$,  we have that $G_{f(G)}$ is a tree of order $n$. Since $\thc(T)\le 2\sqrt n$ for a tree $T$ of order $n$ \cite{CRthrottle}, this  gives the desired bound.
\end{proof}

We observe that if $G$ has $k$ cycles, $f(G) \leq k$ to obtain the following corollary. 

\begin{cor}
    A connected graph of order $n$ with  at most $k$ cycles has cop throttling number at most $2 \sqrt{n}+k$.  In particular, if $G$ is a connected unicyclic graph of order $n$, it follows that $G$ has cop throttling number at most $2 \sqrt{n}+1$.
\end{cor}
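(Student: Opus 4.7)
The plan is to deduce the corollary directly from the preceding proposition by observing the trivial bound $f(G)\le k$ whenever $G$ has at most $k$ cycles. Concretely, I would first point out that by the previous proposition it suffices to bound the vertex feedback number, and then argue that if $G$ contains at most $k$ (distinct) cycles, then one can produce a feedback vertex set of size at most $k$ by selecting one vertex from each cycle.

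More explicitly, I would enumerate the cycles of $G$ as $C_1,\ldots,C_m$ with $m\le k$, and for each $i$ choose an arbitrary vertex $v_i\in V(C_i)$, setting $F=\{v_1,\ldots,v_m\}$. Then $|F|\le m\le k$. Any cycle of $G$ appears in the list $C_1,\ldots,C_m$ and so contains some $v_i\in F$, which means $G-F$ is acyclic. Therefore $f(G)\le|F|\le k$, and applying the proposition gives $\thc(G)\le 2\sqrt n+f(G)\le 2\sqrt n+k$.

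For the ``in particular'' clause, I would simply note that a connected unicyclic graph has exactly one cycle, so $k=1$ and the bound becomes $\thc(G)\le 2\sqrt n+1$, which is the claimed improvement over the $\sqrt 6\sqrt n$ bound from \cite{CRthrottle}.

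There is no substantive obstacle here: the whole argument is a one-line observation about feedback vertex sets combined with the previous proposition. The only small subtlety worth flagging is what is meant by ``number of cycles'' (induced cycles vs.\ all cycles as subgraphs), but either interpretation yields the same bound since selecting one vertex per cycle always produces a feedback vertex set of the stated size.
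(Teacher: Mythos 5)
Your proposal is correct and matches the paper's argument exactly: the paper likewise observes that $f(G)\le k$ when $G$ has at most $k$ cycles and then applies the preceding proposition, merely stating the observation without the explicit one-vertex-per-cycle construction you spell out. (Your closing aside that the induced-cycle interpretation ``yields the same bound'' is the only shaky point, since an induced cycle of $G-F$ need not be induced in $G$, but this is immaterial to the corollary as stated.)
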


%%%%%%%%%%%%%%%%%%%%%%%%%%%%%%%%%%%%%%%%%%%
%%%%%%%%%%%%%%%%%%%%%%%%%%%%%%%%%%%%%%%%%%%

%%%%%%%%%%%%%%%%%%%%%%%%%%%%%%%%%%%%%%%%%%%

	\section{Cop throttling for chordal graphs}\label{s:chord}
	
%	In this section, we explore both standard (additive) and multiplicative cop throttling for chordal graphs before ending the section with a nice result concerning the game of Cops and Robbers on chordal graphs. We say a graph $G$ is a \emph{chordal graph} if $G$ has no induced cycle of length greater than $3$. This class of graphs is of particular interest because they are known to be cop-win \cite{AF88} and, further, because paths and trees are chordal, results obtained in this section extend to those classes as well.  
%	 The main results of  this section concern cop throttling  for chordal graphs. %, but we end this section with a  result concerning the cop number of chordal graphs. 
	 A graph $G$ is a \emph{chordal graph} if $G$ has no induced cycle of length greater than $3$. This class of graphs is of particular interest because they are known to be cop-win \cite{AF88} and, further, since paths and trees are chordal, results obtained in this section extend to those classes as well.  
	
	We begin by establishing a result that shows that the capture time for a chordal graph is determined by the $k$-radius  (see Theorem \ref{theorem chordal capture is radius} below). 
	
	 We  need   a few  definitions and technical lemmas.
	A {\em corner} of a graph $G$ is a vertex $v$ such that there exists another vertex $u\in V(G)$, $u\neq v$, with $N[v]\subseteq N[u]$. In this case, we say {\em $u$ corners $v$} and \emph{$v$ is cornered by $u$}. A set of vertices $C$ is a {\em set of disjoint corners}  if 	every vertex in $C$ is cornered by a vertex outside of $C$. Observe that if $C$ is a set of disjoint corners, $c(G) = c(G-C)$, where $G-C=G[V(G)\setminus C]$.  Our next result  shows that removing a set of disjoint corners cannot increase the capture time of a graph and can decrease the capture time by at most one.  %Our next result  shows that removing corners cannot increase the capture time of a graph, and each time we remove disjoint corners, the capture time cannot decrease by much.
	
	\begin{lem}\label{lemma disjoint corner removal}
		Let $C$ be a set of disjoint corners in a connected graph $G$ and let $S$ be a multiset of $V(G) \setminus C$. Then $\capt(G - C; S) \leq \capt(G;S) \leq \capt(G - C; S) + 1$.
	\end{lem}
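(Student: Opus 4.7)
My plan is to identify $G-C$ as a retract of $G$ via an explicit retraction $\pi$ and then deploy shadow strategies in both directions. Because $C$ is a set of disjoint corners, for each $v\in C$ I can fix a witness $u_v\in V(G)\setminus C$ with $N[v]\subseteq N[u_v]$. Setting $\pi(v)=u_v$ on $C$ and $\pi(v)=v$ off $C$ gives a candidate retraction. Verifying that $\pi$ really is a retraction, i.e.\ that $\pi(x)\pi(y)\in E(G-C)$ or $\pi(x)=\pi(y)$ whenever $xy\in E(G)$, will be done by a short case analysis on whether each endpoint lies in $C$.

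Granting this, for the lower bound $\capt(G-C;S)\le \capt(G;S)$ I will run the cops on $G-C$ by shadowing an optimal $G$-strategy $\sigma$ started from $S$. The real robber's moves in $G-C$ are legal $G$-moves, so they serve as the robber's moves in a parallel game on $G$; projecting the parallel cop positions through $\pi$ yields legal cop moves in $G-C$ by the retraction property. When $\sigma$ captures in the parallel game, the common vertex $r$ is the real robber's vertex, so $r\in V(G)\setminus C$ and $\pi(r)=r$, meaning the real cop on $G-C$ sits on $r$ as well.

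For the upper bound $\capt(G;S)\le \capt(G-C;S)+1$ I reverse the roles. The real cops play in $G$ but follow an optimal $G-C$-strategy $\sigma'$ (legal in $G$ since $E(G-C)\subseteq E(G)$) against a shadow robber located at $\pi(r)$, where $r$ is the real robber's vertex. The retraction property ensures the shadow's moves are legal in $G-C$, so $\sigma'$ catches the shadow within $t':=\capt(G-C;S)$ rounds. At that moment either $r\notin C$ and the real robber is already caught, or $r\in C$ and a real cop is at $u_r$, which is adjacent to $r$; since the robber's next move stays in $N[r]\subseteq N[u_r]$, one further cop move suffices, accounting for the $+1$.

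I expect the main technical point to be the case $x,y\in C$ of the retraction check. The plan there is to chain the two corner inclusions: from $y\in N[x]\subseteq N[u_x]$ together with $u_x\notin C$ I get $u_xy\in E(G)$, and then from $u_x\in N[y]\subseteq N[u_y]$ together with $u_x\notin C$ I conclude $u_xu_y\in E(G)$ or $u_x=u_y$. Once this is in hand, both inequalities are routine shadow arguments.
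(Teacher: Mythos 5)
Your proposal is correct and follows essentially the same route as the paper's proof: in both directions the key move is to replace any visit to a vertex $v\in C$ by a visit to its cornering vertex $u_v$, yielding a shadow cop strategy on $G-C$ for the first inequality and a shadow robber on $G-C$ (caught within $\capt(G-C;S)$ rounds, after which the cornered real robber falls in one more round) for the second. The only cosmetic difference is that you package the substitution $v\mapsto u_v$ as an explicit retraction $\pi$ and verify the edge-preservation cases, whereas the paper performs the same adjustment of strategies informally without invoking the retract language.
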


	\begin{proof}
	     First, if $S$ is not a capture set of $G$, $\capt(G;S) = \capt(G-C;S) = \infty$  and the inequalities hold.
	
		Now, suppose $S$ is a capture set of $G$. We begin by showing that $\capt(G - C; S) \leq \capt(G;S)$. 
		Consider an optimal cop strategy $\psi$ on $G$ with $k$ cops starting on $S$. We adjust $\psi$ so that if any cop ever goes to a vertex $v \in C$, the cop instead goes to a vertex $u \in V(G) \setminus C$ where $u$ corners $v$ and, therefore, $N[v] \subseteq N[u]$. Call this new cop strategy $\psi'$ and observe that $\psi'$ is a legal cop strategy on $G-C$, since wherever this cop moves next is reachable from $u$. We further observe that $\psi'$ also has the property that given a fixed robber strategy, at any given time the vertices in $G-C$ occupied by the cops acting according to $\psi$ are a subset of the vertices in $G-v$ occupied by the cops acting according to $\psi'$. Then, any robber strategy that avoids $C$ is captured by $\psi'$ at least as quickly as it is by $\psi$, i.e. captured in at most $\capt(G;S)$ rounds. Since $\psi'$ never uses a vertex $v\in C$, it is also a cop strategy on $G-C$, so it follows that $\capt(G-C;S) \leq \capt(G;S)$.
		
		 Now we will show $\capt(G;S) \leq \capt(G - C; S) + 1$.
		Let $\phi$ be an optimal robber strategy on $G$, and let $\phi'$ be the robber strategy obtained by adjusting $\phi$ so that whenever the robber goes to a vertex in $v\in C$, instead they go to the vertex $u\in V(G)\setminus C$ such that $u$ corners $v$. Let us imagine for a moment that two robbers are playing simultaneously on $G$, one according to $\phi$ (the $\phi$-robber) and the other according to $\phi'$  (the $\phi'$-robber).  Note that the only time the two robbers do not occupy the same vertex is when the vertex $v$ occupied by the $\phi$-robber is in $C$.    In this case  the vertex occupied by the  $\phi'$-robber corners $v$. When the $\phi$-robber moves from a vertex $v$ in $C$ to a vertex $w$, the $\phi'$-robber is able to either move to $w$ if $w\notin C$, or move to a vertex that corners $w$ if $w\in C$. Since  the $\phi'$-robber never moves into $C$, $\phi'$ is also a strategy on  $G-C$.  Thus, there is a cop strategy with $k$ cops that captures  the $\phi'$-robber in at most $\capt(G-C;S)$ moves. If the   $\phi$-robber has not been caught yet,  they are cornered by the cop that just captured the  $\phi'$-robber, so  they can be captured in the next round. Thus $\capt(G;S) \leq \capt(G - C; S) + 1$, completing the proof.
	\end{proof}
	
	Our next lemma characterizes certain sets of vertices as sets of disjoint corners. A vertex $u$ is said to be a \emph{boundary vertex} of $v$ if $d(u,v)\geq d(w,v)$ for all $w\in N(u)$. Note that  $u$ is a boundary vertex of $v$ if and only if  no $u-v$ geodesic can be extended to a longer geodesic that ends at $v$ and includes $u$.
	
	\begin{lem}\label{lemma boundary disjoint corners}
		Fix a vertex $v$ of a connected chordal graph $G$. Then the set of boundary vertices of $v$ in $G$ is a set of disjoint corners.
	\end{lem}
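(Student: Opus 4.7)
Let $u$ be a boundary vertex of $v$ in the connected chordal graph $G$, and set $k := d(u,v)$. I plan to exhibit a non-boundary vertex $x_1$ with $N[u] \subseteq N[x_1]$. First, $u \ne v$: since $G$ has at least two vertices, $v$ has a neighbor $w$ with $d(w,v) = 1 > 0$, so $v$ is not a boundary vertex of itself. Hence $k \ge 1$, and $u$ has some neighbor $x_1$ with $d(x_1, v) = k-1$, obtained from any shortest $u$--$v$ path. Any such $x_1$ is automatically not a boundary vertex, because $u \in N(x_1)$ satisfies $d(u,v) = k > k-1 = d(x_1, v)$.

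The work is thus to choose $x_1 \in A := N(u) \cap \{z : d(z,v) = k-1\}$ so that $N(u) \subseteq N[x_1]$. I plan to select $x_1 \in A$ maximizing $|N(u) \cap N[x_1]|$ and argue by contradiction: suppose some $w \in N(u) \setminus N[x_1]$ exists. Then $w \ne x_1$ and $w \not\sim x_1$, and the boundary condition combined with $w \sim u$ forces $d(w,v) \in \{k-1, k\}$. Fix shortest paths $P_1\colon x_1 = p_0, \ldots, p_{k-1} = v$ and $P_2\colon w = q_0, \ldots, q_{d(w,v)} = v$. The edges $x_1 u$ and $uw$ together with $P_2$ and $P_1$ traversed in reverse form a closed walk, from which, by passing to the first common vertex of $P_1$ and $P_2$ if they share interior vertices, one extracts a simple cycle $C$ of length at least~$4$.

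By chordality, iterative chord reduction on $C$ must yield an induced triangle. The useful structural restrictions on chords are: (i)~$u$ cannot be adjacent to $p_i$ for $i \ge 1$ (this would force $d(u,v) < k$) and cannot be adjacent to $q_j$ for $j \ge 2$; (ii)~neither $P_1$ nor $P_2$ admits an internal chord, as both are shortest; and (iii)~any cross-chord $p_i q_j$ forces $|i-j| \le 1$, since adjacent vertices' distances to $v$ differ by at most one. Under these constraints, every surviving triangle must either contain the forbidden edge $x_1 w$ (yielding the desired contradiction) or exhibit some $x_1' \in A$ with $|N(u) \cap N[x_1']| > |N(u) \cap N[x_1]|$, contradicting the maximality of $x_1$. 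The main obstacle will be the delicate subcase $d(w,v) = k$ with $u \sim q_1$: in that configuration $q_1 \in A$ and $q_1 \sim w$, making $q_1$ a viable competing candidate, and I expect the chord analysis on the resulting short cycles in $G[V(P_1) \cup V(P_2) \cup \{u\}]$ to pin down the strict inclusion $(N(u) \cap N[x_1]) \cup \{w\} \subseteq N(u) \cap N[q_1]$ needed to conclude.
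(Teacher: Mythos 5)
Your proposal is not yet a proof: the entire burden of the argument --- showing that the extremal choice $x_1\in A$ actually satisfies $N(u)\subseteq N[x_1]$ --- is deferred to a chord analysis that you only outline, and you explicitly leave open the subcase $d(w,v)=k$ with $u\sim q_1$, saying you ``expect'' the analysis to yield $(N(u)\cap N[x_1])\cup\{w\}\subseteq N(u)\cap N[q_1]$. That inclusion is exactly what fails, and it fails for a fundamental reason: the lemma is false as stated. Take $G$ on $\{v,x,y,u,w_1,w_2\}$ with edges $vx$, $vy$, $xy$, $ux$, $uy$, $uw_1$, $uw_2$, $xw_1$, $yw_2$. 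The ordering $w_1,w_2,v,u,x,y$ is a perfect elimination ordering, so $G$ is chordal. Here $d(u,v)=2$ and every neighbor of $u$ is at distance at most $2$ from $v$, so $u$ is a boundary vertex of $v$; but $u$ is not a corner at all, since a vertex $z\neq u$ with $N[u]\subseteq N[z]$ would have to be adjacent to both $w_1$ and $w_2$, and $u$ is the only vertex adjacent to both. In your notation $A=\{x,y\}$, with $N(u)\cap N[x]=\{x,y,w_1\}$ and $N(u)\cap N[y]=\{x,y,w_2\}$ incomparable and of equal size, so no exchange improves either candidate; this is precisely your ``delicate subcase,'' realized with $w=w_2$, $x_1=x$, $q_1=y$, where $y\not\sim w_1$ kills the hoped-for inclusion.

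For what it is worth, the same example defeats the paper's own proof, which takes $w$ to be the neighbor of $u$ on an \emph{arbitrary} $u$--$v$ geodesic and asserts that any $z\in N(u)\setminus\{w\}$ with $z\notin N(w)$ forces a chordless cycle of length more than $3$: with $w=x$ and $z=w_2$, the cycle $u,w_2,y,x,u$ has the chord $uy$, so no contradiction arises. (The failure propagates: in this graph $\capt(G;\{v\})=3>2=\max_{z}d(z,v)$, so the conclusion of Theorem~\ref{theorem chordal capture is radius} also fails for the set $S=\{v\}$, even though $\capt_1(G)=\rad_1(G)=2$ is still attained starting from $x$.) So the problem with your write-up is not merely that the case analysis is incomplete --- no completion exists. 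Your idea of replacing the arbitrary geodesic neighbor by an extremal element of $A$ is genuinely better than the paper's choice (it repairs, for instance, examples where one geodesic neighbor corners $u$ and another does not), but any correct version of this lemma must weaken its statement rather than sharpen its proof.
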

	
	\begin{proof}
		Let $u$ be a boundary vertex of $v$ and let $P$ be a $u-v$ geodesic. Let $w$ be the neighbor of $u$ on $P$. We claim that $w$ corners $u$. First, notice that if $w$ is the only neighbor of $u$, $w$ corners $u$ and $d(w,v)<d(u,v)$, so $w$ is not a boundary vertex of $v$. Now, suppose $N(u) > 1$. Let $x\in N(u)\setminus \{w\}$, and consider the shortest path from $x$ to $w$ that does not use $u$. If this path is of length more than $1$, or terminates at a vertex other than $v$, this creates a chordless cycle of length more than $3$, a contradiction. Thus, $x\in N(w)$, so $w$ corners $u$. Furthermore $d(w,v)<d(u,v)$, so $w$ is not a boundary vertex of $v$. Thus, the set of boundary vertices of $v$ is a set of disjoint corners.
	\end{proof}
	
%We also need an adaptation of the following result from \cite{BPPR17} bounding the capture time given a partition of a graph into retracts.  We state a slightly weaker version of this result as we do not need the full strength of the theorem for our purposes, and then generalize this result in a following lemma.

%	\begin{thm}[Theorem 5 from \cite{BPPR17}]\label{theorem retract partition original}
%		Suppose that $V(G)=V_1\cup \dots \cup V_t$, where $G[V_i]$ is a retract for each $1\leq i\leq t$. Then $\capt_t(G)\leq \max_{1\leq i\leq t} \capt_1(G[V_i])$.
%	\end{thm}
%\SE{If we don't use Theorem 4.3, I don't think we need to state it. We can just mention that the lemma is an adaptation of a theorem from the overprescribed paper I think. If others disagree though, that is fine.}

The final lemma we use for the proof of Theorem \ref{theorem chordal capture is radius} is an adaptation of a theorem in \cite{BPPR17} bounding the capture time given a covering of a graph by retracts.

\begin{lem}\label{theorem retract partition}
	Suppose that $G$ is connected and $V(G)=V_1\cup \dots \cup V_t$, where $G[V_i]$ is a retract for each $1\leq i\leq t$, and let $S$ be a multiset of $V(G)$ of order $t$. If $v_1,\dots,v_t$ are (possibly repeated) elements of $S$ such that $v_i\in V_i$ for $1\leq i\leq t$, then 
	\[
	\mathrm{capt}(G;S)\leq \max_{1\leq i\leq t} \mathrm{capt}(G[V_i];\{v_i\}).
	\]
\end{lem}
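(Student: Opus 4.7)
The plan is to use the classical shadow-chasing argument adapted to retracts. Place cop $c_i$ at $v_i$ for $1 \le i \le t$, fix a retraction $\varphi_i : V(G) \to V_i$ for each $i$, and direct cop $c_i$ to ignore the true position of the robber and instead pursue the shadow $\varphi_i(r)$ inside $G[V_i]$, following an optimal single-cop strategy starting from $v_i$. I will show that every shadow is captured by round $T := \max_i \capt(G[V_i]; \{v_i\})$, and that this forces capture of the actual robber by then.

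The first key step is to verify that each shadow behaves like a legitimate robber in its retract: if the robber moves from $r$ to $r'$ with $r=r'$ or $rr' \in E(G)$, the retraction property gives either $\varphi_i(r)=\varphi_i(r')$ or $\varphi_i(r)\varphi_i(r') \in E(G[V_i])$, which is a legal robber move in $G[V_i]$. Since $G[V_i]$ is induced, any cop move that the optimal single-cop strategy prescribes inside $G[V_i]$ is also a legal move for $c_i$ in $G$. Thus $c_i$ overtakes its shadow in at most $\capt(G[V_i]; \{v_i\}) \le T$ rounds and afterwards stays with it by copying its moves. After $T$ rounds every cop $c_i$ sits on $\varphi_i(r)$; choosing any $j$ with $r \in V_j$ (which exists since the $V_i$ cover $V(G)$) and using $\varphi_j|_{V_j}=\mathrm{id}$ yields $\varphi_j(r)=r$, so $c_j$ stands on the robber's vertex and the capture is complete.

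The main technical point to handle carefully is the transition from first overtaking the shadow to tracking it indefinitely: once $c_i$ lands on $\varphi_i(r)$, it must mirror every subsequent shadow move. This is fine because each shadow move is either a non-move or the traversal of an edge of $G[V_i] \subseteq G$, which a cop at the same vertex can execute in $G$. A small bookkeeping detail is the round order: since cops move first, each post-capture round begins with $c_i$ on the robber's current shadow and ends with $c_i$ on the new shadow after the robber's move, so an induction on rounds preserves the invariant cleanly. Beyond these routine points, I do not foresee serious obstacles, as the retract property packages the heavy lifting into the standard shadow template.
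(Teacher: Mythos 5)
Your proof is correct and follows essentially the same route as the paper's: the paper simply asserts that a single cop placed at $v_i$ can guard the retract $G[V_i]$ within $\capt(G[V_i];\{v_i\})$ rounds and that once every $V_i$ is guarded the robber must be caught, while you unpack that guarding step into the explicit shadow-chasing argument. One small quibble: your tracking invariant has the timing reversed---since cops move first, at the end of a round cop $c_i$ sits on the shadow of the robber's position from \emph{before} the robber's last move and only catches up to the new shadow on its next move---but this one-step lag is exactly what produces the capture (cop $c_j$ moves onto $\varphi_j(r)=r$ when the robber currently occupies $r\in V_j$), so the bound is unaffected.
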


\begin{proof}
First note that if $c(G[V_i])\geq 2$ for any $i$, then $\mathrm{capt}(G[V_i];\{v_i\})=\infty$ and we are done. If each graph $G[V_i]$ is cop-win, then a single cop placed at $v_i$ can guard this graph in at most $\mathrm{capt}(G[V_i];\{v_i\})$ rounds. Thus, the strategy for the cop placed on $v_i$ is to guard $G[V_i]$. Since the $V_i$'s cover $V(G)$, after at most $\max_{1\leq i\leq t} \mathrm{capt}(G[V_i];\{v_i\})$ rounds, the entire graph $G$ is guarded, so the robber must be caught.
\end{proof}

	The preceding lemma is especially useful for chordal graphs, since all connected induced subgraphs of chordal graphs are retracts (see \cite{Q78, Q85}). We now have all the tools necessary to state our first main result on chordal graphs. The following generalizes a corollary from \cite{BPPR17} which gives the same result, but only for trees.  The {\em ball} at vertex $v$ of radius $\ell$ is $B(v,\ell)=\{w:d(v,w)\le \ell\}$.
	
	\begin{thm}\label{theorem chordal capture is radius}
		For any connected chordal graph $G$ and any set $S \subseteq V(G)$, $\capt(G;S)=\max_{v\in V(G)} d(v,S)$. 
	\end{thm}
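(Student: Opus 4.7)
The lower bound $\capt(G; S) \geq \max_{v} d(v, S)$ is immediate, since a robber stationed at a vertex realizing the maximum and refusing to move cannot be captured before a cop traverses that distance. For the upper bound I plan to induct on $n = |V(G)|$, with $r := \max_{v} d(v, S)$; the base case $n = 1$ is trivial, and the inductive step splits on whether $|S| = 1$ or $|S| \geq 2$. The single-cop case will be handled by boundary-vertex removal, using Lemmas \ref{lemma disjoint corner removal} and \ref{lemma boundary disjoint corners}, while the multi-cop case will be reduced to the single-cop case via a BFS-based Voronoi partition together with Lemma \ref{theorem retract partition}. The main technical hurdle is the single-cop case, where I must verify that stripping the boundary vertices of $s$ reduces the eccentricity of $s$ by exactly one while preserving connectivity and the shorter geodesics to $s$; once this is in hand, the multi-cop step follows almost formally.

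For $S = \{s\}$, let $B$ be the set of boundary vertices of $s$; by Lemma \ref{lemma boundary disjoint corners}, $B$ is a set of disjoint corners, and $s \notin B$ since every neighbor of $s$ is strictly farther from $s$ than $s$ itself. The result is trivial when $r = 0$, so assume $r \geq 1$; then every vertex at distance exactly $r$ from $s$ is a local maximum of $d(\cdot, s)$ and hence lies in $B$, so $B \neq \emptyset$ and $|V(G - B)| < n$. The key observation is that for any $v \in V(G) \setminus B$ with $v \neq s$, every internal vertex of any $v$-$s$ geodesic in $G$ has a predecessor on the geodesic that is strictly farther from $s$, and hence is not in $B$; so the geodesic lies entirely in $V(G - B)$. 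This yields simultaneously that $G - B$ is connected and that $d_{G - B}(v, s) = d_G(v, s)$ for every $v \in V(G - B)$. Moreover $v \notin B$ means $v$ is not a local maximum of $d(\cdot, s)$, so $d_G(v, s) \leq r - 1$. Applying the inductive hypothesis to the connected chordal graph $G - B$ with the single cop $\{s\}$, combined with Lemma \ref{lemma disjoint corner removal}, gives $\capt(G; \{s\}) \leq \capt(G - B; \{s\}) + 1 \leq r$.

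For $|S| \geq 2$, enumerate $S = \{s_1, \ldots, s_t\}$ and build a partition $V(G) = V_1 \cup \cdots \cup V_t$ inductively on BFS level: initialize $V_i = \{s_i\}$, and for $k = 1, 2, \ldots$ assign each vertex $v$ with $d(v, S) = k$ to any $V_i$ already containing a neighbor of $v$ at distance $k - 1$ from $S$, breaking ties by smallest index. By induction on $k$, each $G[V_i]$ is connected and each $v \in V_i$ admits an $s_i$-$v$ path of length $d_G(v, S) \leq r$ lying in $V_i$, so $d_{G[V_i]}(v, s_i) = d_G(v, S) \leq r$. Because $G[V_i]$ is a connected induced subgraph of the chordal graph $G$, it is itself chordal and is a retract of $G$ (by \cite{Q78, Q85}); and since $s_j \in V_j$ for every $j \neq i$, we have $|V_i| < n$. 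The inductive hypothesis applied to each $G[V_i]$ with the single cop $\{s_i\}$ yields $\capt(G[V_i]; \{s_i\}) \leq r$, and Lemma \ref{theorem retract partition} then gives $\capt(G; S) \leq \max_i \capt(G[V_i]; \{s_i\}) \leq r$, completing the induction.
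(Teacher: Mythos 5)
Your proof is correct and follows essentially the same route as the paper's: both arguments reduce the multi-cop case to single-cop subproblems via Lemma~\ref{theorem retract partition} (using that connected induced subgraphs of chordal graphs are retracts) and then bound the single-cop capture time by deleting boundary vertices via Lemmas~\ref{lemma boundary disjoint corners} and~\ref{lemma disjoint corner removal}. The only differences are organizational --- the paper covers $V(G)$ by the balls $B(v_i,\ell)$ and iterates shell removal $\ell$ times inside each ball, whereas you use a BFS Voronoi partition and a global induction on $n$ that strips all boundary vertices of $s$ at once --- and your version has the minor merit of making the connectivity and distance-preservation checks explicit.
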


	\begin{proof}
		Let $S \subseteq V(G)$ and let $\ell:=\max_{v\in V(G)} d(v,S)$. It is clear that $\capt(G;S) \geq \ell$ since once the cops have been placed on $S$, the robber can choose any vertex at least distance $\ell$ from every cop and stay there, avoiding capture until after $\ell$ rounds.
		
		We claim that $\capt(G;S) \leq \ell$ as well. Let $S=\{v_1,\dots,v_k\}$ and let $V_i=B(v_i,\ell)$ for each $1\leq i\leq k$. Then $V(G)=V_1\cup\dots\cup V_k$. Furthermore,  $G[V_i]$ is connected, $v_i\in V_i$ for each $1\leq i\leq k$, and since connected induced subgraphs of chordal graphs are retracts, Lemma~\ref{theorem retract partition} implies that $\capt(G;S) \leq \max_{1\leq i\leq t} \mathrm{capt}(G[V_i];\{v_i\})$.
		
		Now, fix some $1\leq i\leq k$. By Lemma \ref{lemma boundary disjoint corners}, the boundary vertices of $v_i$ in $G[V_i]$ constitute a set of disjoint corners. Since $B(v_i,\ell)\setminus B(v_i,\ell-1)$ is a subset of the set of boundary vertices of $v_i$ in $G[V_i]$ and, thus, a set of disjoint corners, Lemma \ref{lemma disjoint corner removal} implies that
		\[
		\capt(G[B(v_i,\ell)];\{v_i\})\leq \capt(G[B(v_i,\ell-1)];\{v_i\})+1,
		\]
		and iterating this, we have
		\[
		\capt(G[B(v_i,\ell)];\{v_i\})\leq\capt(G[B(v_i,1)];\{v_i\})+\ell-1=\ell.
		\]
		Thus, $\capt(G[V_i];\{v_i\})\leq \ell$ for all $1\leq i\leq k$, so $\capt(G;S)\leq \ell$, completing the proof.
	\end{proof}
	
	The next two corollaries  are immediate from Theorem \ref{theorem chordal capture is radius}.
	
\begin{cor}\label{cor chordal capture is radius}
	For any connected chordal graph $G$, $\capt_k(G)=\rad_k(G)$. 
\end{cor}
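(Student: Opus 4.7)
The plan is to deduce this corollary directly from Theorem~\ref{theorem chordal capture is radius} by minimizing both sides of the identity $\capt(G;S) = \max_{v \in V(G)} d(v,S)$ over all vertex sets $S$ of size $k$. The right-hand side becomes $\rad_k(G)$ verbatim from its definition $\rad_k(G) = \min_{S \subseteq V,\, |S|=k} \max_{v \in V} d(v,S)$, and the left-hand side becomes $\capt_k(G) = \min_{|S|=k} \capt(G;S)$ by definition.

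The only small wrinkle is the distinction between sets and multisets: $\capt_k(G)$ is naturally defined using multisets of cop positions (two cops may start on the same vertex), whereas Theorem~\ref{theorem chordal capture is radius} and $\rad_k(G)$ refer to ordinary sets. To reconcile this, I would observe that for any multiset $S$ of cardinality $k$ with underlying distinct-vertex set $S'$, every cop strategy available from $S'$ is also realizable from $S$ (duplicated cops can simply mimic one cop), so $\capt(G;S) \ge \capt(G;S')$; hence the minimum over multisets of size $k$ coincides with the minimum over ordinary sets of size at most $k$. Since any chordal graph is cop-win, placing an additional cop can only decrease capture time, and so the minimum over sets of size at most $k$ in fact equals the minimum over sets of size exactly $k$.

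I do not anticipate any real obstacle here, since the corollary is essentially just the uniform-in-$S$ restatement of Theorem~\ref{theorem chordal capture is radius}. All of the substantive work, including the arguments involving corners, boundary vertices, and covers by retracts, has already been carried out in Lemmas~\ref{lemma disjoint corner removal}, \ref{lemma boundary disjoint corners}, and \ref{theorem retract partition}, and in the proof of Theorem~\ref{theorem chordal capture is radius} itself.
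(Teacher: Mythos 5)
Your main line is exactly the paper's: the corollary is treated there as immediate from Theorem~\ref{theorem chordal capture is radius} by minimizing the identity $\capt(G;S)=\max_{v\in V(G)}d(v,S)$ over all $S$ with $|S|=k$, which turns the two sides into $\capt_k(G)$ and $\rad_k(G)$ by definition. The one flaw is in your multiset discussion: you assert $\capt(G;S)\ge\capt(G;S')$ for a multiset $S$ with underlying set $S'$, but the justification you give (every strategy from $S'$ is realizable from $S$) proves the \emph{reverse} inequality $\capt(G;S)\le\capt(G;S')$, since giving the cops more options can only speed up capture. Indeed the inequality as you state it is false in general: two cops starting on the same vertex may split up and do strictly better than one cop there (on $C_4$, a doubleton multiset on a single vertex has finite capture time while the singleton does not). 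The conclusion you want still holds, but for a different reason: the robber's lower bound $\capt(G;S)\ge\max_{v}d(v,S)$ is valid for multisets, and $\max_{v}d(v,S)=\max_{v}d(v,S')\ge\rad_{|S'|}(G)\ge\rad_k(G)$ because any set of size $|S'|\le k$ extends to a set of size $k$ without increasing the maximum distance; combined with $\capt_k(G)\le\min_{|S|=k}\capt(G;S)=\rad_k(G)$ over genuine sets (from the theorem), this gives both inequalities. Alternatively, note the paper's stated convention that $A\subseteq B$ is applied to multisets, so the theorem (whose proof covers $V(G)$ by balls around the elements of $S$ and is unaffected by repetitions) already handles multisets directly.
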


%The next two corollaries of Theorem \ref{theorem chordal capture is radius} provide bounds on the  throttling number of chordal graphs. %The first is presented without proof as it follows immediately from applying Theorem \ref{theorem chordal capture is radius} with a capture set $S$ of size $1$.

\begin{cor}
	For any connected chordal graph $G$, $\thc(G)\leq 1+\rad(G)$.
\end{cor}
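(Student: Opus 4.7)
The plan is to observe that this corollary is essentially a one-line specialization of the preceding Corollary \ref{cor chordal capture is radius}. By definition, $\thc(G)=\min_k\{k+\capt_k(G)\}$, so to bound $\thc(G)$ from above it suffices to exhibit a single value of $k$ that achieves the desired bound. The natural choice is $k=1$: I would place one cop at a vertex realizing the radius of $G$ (i.e., a center vertex), which minimizes the eccentricity over all singleton cop placements.

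First I would check that $k=1$ is a legitimate choice, i.e., that $1\ge c(G)$. This is the classical fact that connected chordal graphs are cop-win, already cited in the excerpt via \cite{AF88}, so a single cop can indeed capture the robber on any connected chordal graph. Consequently $\capt_1(G)$ is finite, and by Corollary \ref{cor chordal capture is radius} we have $\capt_1(G)=\rad_1(G)$. Since $\rad_1(G)$ coincides with the usual graph radius $\rad(G)$ (the definition of $\rad_k$ given in the introduction with $k=1$ reduces exactly to the standard minimax definition of radius), I obtain
\[
\thc(G)\le 1+\capt_1(G)=1+\rad_1(G)=1+\rad(G).
\]

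There is essentially no obstacle here: the work has been done in establishing Theorem \ref{theorem chordal capture is radius} and its corollary. The only thing to be careful about is not overstating the result — it is an upper bound realized by a one-cop strategy, and it need not be tight in general, since for some chordal graphs a larger team of cops can do better (this is precisely what the subsequent study of product throttling, where $\thcx(G)=1+\rad(G)$ is shown to be exact for chordal graphs, will address).
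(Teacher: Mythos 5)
Your proposal is correct and matches the paper's intent exactly: the paper states this corollary as immediate from Theorem \ref{theorem chordal capture is radius}, and the specialization to $k=1$ with $\capt_1(G)=\rad_1(G)=\rad(G)$ is precisely the intended one-line argument. Your remark that cop-win-ness of chordal graphs guarantees finiteness of $\capt_1(G)$ is a fine sanity check, though it also follows directly from Theorem \ref{theorem chordal capture is radius}, which gives $\capt(G;S)=\max_{v}d(v,S)<\infty$ for any nonempty $S$ in a connected chordal graph.
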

	
The preceding bound is only good if the radius of $G$ is small, but the radius of chordal graphs can be as large as $\lfloor|V(G)|/2\rfloor$, as in the path $P_n$. The next corollary gives an upper bound that is much better for chordal graphs with large radius. Before we state our next corollary, we need a result due to Meir and Moon \cite{MM75}.   Let $\gamma_k(G)$ denote the \emph{$k$-distance domination number}, which is the size of the smallest set $S\subseteq V(G)$ such that $d(v,S)\leq k$ for all $v\in V(G)$.

\begin{thm}{\rm \cite{MM75}}\label{theorem k radius bound}
	For every connected graph $G$ on $n\geq k+1$ vertices, $\gamma_k(G)\leq \left\lfloor\frac{n}{k+1}\right\rfloor$.
\end{thm}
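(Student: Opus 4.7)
The plan is first to reduce to trees and then to prove the bound on trees by strong induction on $n$. For the reduction, I would pass to an arbitrary spanning tree $T$ of $G$; because $d_T(u,v)\geq d_G(u,v)$ for all $u,v$, any $k$-distance dominating set of $T$ also $k$-dominates $G$, so $\gamma_k(G)\leq \gamma_k(T)$. It therefore suffices to prove the inequality for trees.

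For the tree part, I would induct on $n$ with the following split. The easy branch is when $\diam(T)\leq 2k$: a center vertex of $T$ lies within distance $k$ of every other vertex, so $\gamma_k(T)\leq 1\leq \lfloor n/(k+1)\rfloor$, where the last inequality uses the hypothesis $n\geq k+1$. This handles all small cases as well as any stage of the induction in which $T$ happens to have small diameter. Otherwise $\diam(T)\geq 2k+1$ and in particular $n\geq 2k+2$. Fix a longest path $P=v_0v_1\cdots v_d$ with $d\geq 2k+1$ and let $T_1$ be the component of $T-v_kv_{k+1}$ containing $v_k$. The crucial structural claim is that $v_k$ already $k$-dominates $T_1$: if some $w\in V(T_1)$ had $d(w,v_k)>k$, then splicing a $w$-to-$v_k$ geodesic in $T_1$ onto $v_kv_{k+1}\cdots v_d$ would produce a path strictly longer than $P$, contradicting its maximality. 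Moreover $v_0,v_1,\ldots,v_k\in V(T_1)$, so $|V(T_1)|\geq k+1$.

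I would then place $v_k$ into the dominating set and recurse on the connected subtree $T'=T-V(T_1)$, which has $n-|V(T_1)|\leq n-(k+1)$ vertices. When $|V(T')|\geq k+1$, induction gives $\gamma_k(T')\leq \lfloor (n-|V(T_1)|)/(k+1)\rfloor\leq \lfloor n/(k+1)\rfloor-1$, and hence $\gamma_k(T)\leq 1+\gamma_k(T')\leq \lfloor n/(k+1)\rfloor$. The main obstacle I anticipate is the residual case $1\leq |V(T')|\leq k$, where the inductive hypothesis does not formally apply; this I would resolve by observing that in this range $\diam(T')\leq k-1$, so any single vertex of $T'$ already $k$-dominates it, yielding $\gamma_k(T)\leq 2\leq \lfloor n/(k+1)\rfloor$ (the last inequality using $n\geq 2k+2$). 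Apart from tracking this edge case, no step should require more than routine bookkeeping.
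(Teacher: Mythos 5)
The paper does not prove this statement; it is quoted from Meir and Moon \cite{MM75}, so there is no in-paper argument to compare against. Your proof is correct and self-contained, and it is essentially the classical argument: reduce to a spanning tree (valid since $d_T(u,v)\ge d_G(u,v)$), then induct by cutting the edge $v_kv_{k+1}$ of a longest path, observing that maximality of the path forces every vertex of the component $T_1$ containing $v_0,\dots,v_k$ to lie within distance $k$ of $v_k$, and charging the chosen vertex $v_k$ against the $\ge k+1$ vertices of $T_1$. The steps check out: the spliced walk really is a path because the two pieces meet only at $v_k$ (the $w$--$v_k$ path stays inside $T_1$ while $v_{k+1},\dots,v_d$ lie outside it); the center argument in the small-diameter case uses $\rad(T)=\lceil \diam(T)/2\rceil$, which is exactly where the reduction to trees is genuinely needed (this fails for general graphs, e.g.\ $C_5$ has diameter $2$ and radius $2$); and $T'$ is connected, so the inductive hypothesis applies. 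One small remark: the residual case $1\le |V(T')|\le k$ that you flag as the main obstacle is in fact vacuous, since $T'$ contains $v_{k+1},\dots,v_d$, which is $d-k\ge k+1$ vertices; your treatment of it is harmless but unnecessary.
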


The next result now follows from Theorem \ref{theorem chordal capture is radius}.

\begin{cor}\label{tree-corollary}
	For any connected chordal graph $G$ on $n$ vertices, $\thc(G)\leq \lceil\sqrt{n}\rceil+\lfloor\sqrt{n}\rfloor-1    \le 2\sqrt n.$
\end{cor}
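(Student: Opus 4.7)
The plan is to combine Corollary \ref{cor chordal capture is radius}, which identifies $\capt_k(G)$ with $\rad_k(G)$ for chordal $G$, with the Meir--Moon bound (Theorem \ref{theorem k radius bound}) by choosing $k\approx\sqrt n$. By Corollary \ref{cor chordal capture is radius}, $\thc(G)\le k+\rad_k(G)$ for any $k\ge 1$, and $\rad_k(G)\le r$ is equivalent to the existence of a $k$-vertex set whose closed $r$-balls cover $V(G)$, that is, to $\gamma_r(G)\le k$.

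Setting $k=\lfloor\sqrt n\rfloor$ and $r=\lceil\sqrt n\rceil-1$, I would apply Theorem \ref{theorem k radius bound} (which requires $n\ge r+1=\lceil\sqrt n\rceil$, trivially true) to obtain
\[
\gamma_r(G)\le \left\lfloor\frac{n}{r+1}\right\rfloor = \left\lfloor\frac{n}{\lceil\sqrt n\rceil}\right\rfloor.
\]
The main (and only) technical step is the arithmetic check that $\lfloor n/\lceil\sqrt n\rceil\rfloor\le\lfloor\sqrt n\rfloor$. I would handle this by splitting into three cases according to where $n$ lies between consecutive squares: if $n=m^2$ then $\lceil\sqrt n\rceil=m$ and $n/m=m$; if $m^2<n\le m^2+m$ then $\lceil\sqrt n\rceil=m+1$ and $n/(m+1)\le m$; and if $m^2+m<n<(m+1)^2$ then $\lceil\sqrt n\rceil=m+1$ and $n/(m+1)<m+1$, so in every case $\lfloor n/\lceil\sqrt n\rceil\rfloor\le m=\lfloor\sqrt n\rfloor$.

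This yields $\gamma_r(G)\le k$, hence $\rad_k(G)\le r=\lceil\sqrt n\rceil-1$, and combining with Corollary \ref{cor chordal capture is radius} gives
\[
\thc(G)\le k+\rad_k(G)\le \lfloor\sqrt n\rfloor+\lceil\sqrt n\rceil-1.
\]
For the final inequality $\lfloor\sqrt n\rfloor+\lceil\sqrt n\rceil-1\le 2\sqrt n$, I would note that if $\sqrt n\in\mathbb Z$ the left side equals $2\sqrt n-1$, while otherwise $\lfloor\sqrt n\rfloor+\lceil\sqrt n\rceil=2\lfloor\sqrt n\rfloor+1<2\sqrt n+1$, so the bound $\le 2\sqrt n$ holds in both cases. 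The proof is essentially a one-line deduction from the two cited results together with the small case analysis above, and I do not foresee any other obstacle.
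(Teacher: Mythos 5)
Your proposal is correct and follows the paper's proof essentially verbatim: both apply the Meir--Moon bound with distance parameter $\lceil\sqrt n\rceil-1$ to get $\rad_{\lfloor\sqrt n\rfloor}(G)\le\lceil\sqrt n\rceil-1$ and then invoke the capture-time-equals-radius result for chordal graphs. The only difference is that you make explicit the arithmetic verification that $\lfloor n/\lceil\sqrt n\rceil\rfloor\le\lfloor\sqrt n\rfloor$, which the paper leaves implicit.
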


\begin{proof}
	By Theorem \ref{theorem k radius bound}, $\gamma_{\lceil\sqrt{n}\rceil-1}(G)\leq \lfloor\sqrt{n}\rfloor$, so $\rad_{\lfloor\sqrt{n}\rfloor}(G)\leq \lceil\sqrt{n}\rceil-1$. By Theorem \ref{theorem chordal capture is radius}, we have that
	\bea
	\thc(G)&\leq& \lfloor\sqrt{n}\rfloor+\rad_{\lfloor\sqrt{n}\rfloor}(G)\\
	&\leq &  \lceil\sqrt{n}\rceil+\lfloor\sqrt{n}\rfloor-1\\
	\null \qquad\qquad\qquad\qquad\qquad\qquad &  \leq & 2\sqrt{n}. \qquad\qquad\qquad\qquad\qquad\qquad\qquad\qquad\qedhere
	\eea
\end{proof}

It is worth noting that since trees are a subclass of chordal graphs, the previous corollary gives a generalization  of Theorem 3.9 from \cite{CRthrottle}, which states $\thc(T)\leq 2\lfloor\sqrt{n}\rfloor$ for any tree $T$ on $n$ vertices.

We end this section with a result that does not directly apply to throttling, but is nonetheless an interesting fact about the game of Cops and Robbers on chordal graphs. This result resolves an open problem from \cite{topdir}.\\

\begin{thm}
	An outerplanar graph $G$ is cop-win if and only if $G$ is connected  and chordal.
\end{thm}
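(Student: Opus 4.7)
The backward direction is immediate from the classical Anstee--Farber result \cite{AF88}, already cited earlier, that every connected chordal graph is cop-win. The content of the theorem lies in the forward direction, and my plan is to prove the contrapositive: if $G$ is a connected outerplanar graph that is not chordal, then $G$ is not cop-win. The tool is the Nowakowski--Winkler--Quilliot characterization \cite{NW83,Q78} of cop-win graphs as \emph{dismantlable} graphs, i.e., graphs that can be reduced to a single vertex by a sequence of corner removals.

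Suppose $G$ is outerplanar and contains an induced cycle $C$ of length $k\geq 4$. The key claim I aim to prove is the following: in every induced subgraph $G'$ of $G$ which still contains $V(C)$, no vertex of $V(C)$ is a corner of $G'$. Granting the claim, any sequence of corner removals starting from $G$ avoids $V(C)$ forever, and so the dismantling process cannot reduce $G$ below the $k\geq 4$ vertices of $V(C)$, contradicting dismantlability.

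To establish the claim, fix $v\in V(C)$ with $C$-neighbors $v'$ and $v''$, and suppose some $u\neq v$ corners $v$ in $G'$, so that $u$ is adjacent in $G'$ to each of $v,v',v''$. In the case $u\in V(C)\setminus\{v',v''\}$, the edge $uv$ is a chord of $C$, contradicting that $C$ is induced. In the case $u\in\{v',v''\}$, say $u=v'$, the inclusion $v''\in N[v]\subseteq N[u]$ gives $v'\sim v''$, again contradicting that $C$ is induced (since $k\geq 4$ forces $v'$ and $v''$ to be at distance $2$ on $C$). In the remaining case $u\notin V(C)$, the three paths $v'uv''$, $v'vv''$, and the longer arc of $C$ from $v'$ to $v''$ are internally disjoint and each of length at least $2$; choosing one internal vertex from each produces a subdivision of $K_{2,3}$ inside $G$, which is impossible because outerplanar graphs are $K_{2,3}$-minor-free.

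The main obstacle is the third case, as it is the only place where outerplanarity enters the argument (the first two cases hold in any graph with an induced cycle of length at least $4$). With the claim in hand, the rest of the proof is immediate: each corner removal produces an induced subgraph of $G$, which remains outerplanar and in which $C$ is still an induced cycle, so the three-case analysis reapplies verbatim and no vertex of $V(C)$ can ever be removed.
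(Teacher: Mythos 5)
Your argument is correct, but it takes a genuinely different route from the paper's. The paper handles the forward direction by exhibiting an explicit robber strategy: the robber confines itself to the induced cycle $C$ of length at least $4$ and uses the fact that, by outerplanarity, every vertex of $G$ is adjacent to at most two vertices of $C$, so a threatened robber always has a safe cycle-neighbor to move to. You instead invoke the Nowakowski--Winkler--Quilliot dismantlability characterization of cop-win graphs \cite{NW83, Q78} and show that no vertex of $C$ can ever become a corner in any induced subgraph containing $V(C)$, using $K_{2,3}$-subdivision-freeness of outerplanar graphs to rule out a vertex off the cycle dominating three consecutive cycle vertices. Both proofs ultimately rest on the same structural fact (no vertex can contain $N[v]$ in its closed neighborhood for $v$ on a long induced cycle of an outerplanar graph), but your version routes it through the corner machinery the paper already develops in Section~\ref{s:chord}, at the cost of importing the dismantlability theorem as a black box, whereas the paper's strategy argument is self-contained and more elementary; your case~3 could equally be settled by $K_4$-minor-freedom (contract the long $v'$--$v''$ arc of $C$ to get $K_4$ on $\{u,v,v',v''\}$), which is the form in which the paper implicitly uses outerplanarity. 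One minor omission: the contrapositive of the forward direction also includes the case where $G$ is disconnected, which you do not mention; it is of course trivial (the robber starts in a component without a cop), and the paper dispatches it in one sentence, but for completeness you should too.
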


 \begin{proof}
	If $G$ is connected and chordal, then $G$ is cop-win \cite{AF88}.  If $G$ is not connected, then $G$ is not cop-win.  So assume that $G$ is outerplanar, connected, and not chordal.  Thus, there  exists a subset $U \subseteq V(G)$ with $|U| > 3$ such that $G[{U}]$ is a cycle.
	Then the robber can start on a vertex of $U$ that is not adjacent to the cop's starting vertex, since all vertices on the cycle are adjacent to only two other vertices on the cycle, and all vertices outside the cycle are only adjacent to at most two vertices on the cycle because $G$ is outerplanar.
	If the cop ever moves to a vertex that is adjacent to the robber's current position, the robber can always move to a vertex that is non-adjacent to the cop's current position,  since the robber's current vertex has two neighbors on the cycle and the cop is only ever adjacent to at most two vertices on the cycle (including the robber's current vertex). Otherwise, the robber waits at their current vertex for the cop to reach one of their neighbors.
%	For the other direction, suppose that there does not exist any subset $\sigma \subset V(G)$ with $|\sigma| > 3$ such that $G[{\sigma}]$ is a cycle. Observe that the graph $G$ is dismantlable. Specifically all degree $1$ vertices are corners and can be deleted. The resulting graph must have a degree $2$ vertex (in a face corresponding to a leaf of the weak dual of $G$, which is a forest), and this vertex is part of a triangle since it can't be part of an induced cycle of size at least $4$. So, the vertex is a corner and can be deleted, resulting in another outerplanar graph.
\end{proof}

%%%%%%%%%%%%%%%%%%%%%%%%%%%%%%%%%%%%%%%%%%%

\section{Product throttling for Cops and Robbers}\label{sprodthrot}

In this section we consider product cop throttling, to better represent the idea of optimizing resources in a situation where the most relevant  metric is person-hours or some similar measure. 
This extends the notion of speed-up explored in \cite{karp-ramachandran,luccio-pagli-conference,luccio-pagli,luccio-pagli-decontamination}.
%The product cop throttling number is $\thcx(G) = \min_{S \subseteq V(G)} \left\{|S|(1+\capt(S; G))\right\}$ $=\min_{c(G)\le k \le n} \thcx(G,k)$.  %We begin with  examples illustrating  a variety of combinations of cops and capture time to realize product cop throttling number.
We begin by comparing the product cop throttling number $\thcx(G)$ and the cop throttling number $\thc(G)$.

\begin{rem}\label{sum=prod}
Let $G$ be a graph. For any capture set $S$,  
\[\thcx(G;S)=|S|(1+ \capt(G;S))=|S|+|S|\capt(G;S)\ge |S|+\capt(G;S)=\thc(G;S),\] so $  \thcx(G)\ge \thc(G)$.  Furthermore,  $\thcx(G)=\thc(G)$ if and only if $\thc(G)=\thc(G,1)$ or $\thc(G)=\thc(G,n)$; i.e., the cop throttling number can be realized with a single cop or a cop on every vertex. %there exists a set $S$ of cardinality one such that $\thc(G;S)=\thc(G)$. 
 \end{rem}

Let $G$ be a graph of order $n$.  If $n\ge 2$, then $\thcx(G)\ge 2$.  Clearly $\thcx(G)\le n$ since we can place a cop on each vertex.  By using the minimum number of cops needed to capture the robber, $\thcx(G)\le c(G)(1+\capt(G))$. Since a dominating set of cardinality less than $n$ has capture time equal to one, $\thcx(G)\le 2\gamma(G)$, where $\gamma(G)$ denotes the domination number of $G$.   If $G$ has no isolated vertices, then $\thcx(G;S)\le n$ can be achieved with nonzero capture time because $\gamma(G)\le \lf\frac n 2\rf$. 

\begin{prop}\label{thcthcxbounds}
For any graph $G$, $\thc(G) \leq \thcx(G) \leq \left \lfloor \frac{(\thc(G) + 1)^2}{4} \right \rfloor$.
\end{prop}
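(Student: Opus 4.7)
The plan is to prove each inequality separately. The left inequality $\thc(G)\le\thcx(G)$ is already recorded in Remark~\ref{sum=prod}, so I would simply cite it. The work is in the right inequality $\thcx(G) \leq \left\lfloor (\thc(G)+1)^2/4 \right\rfloor$.

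For the right inequality, I would let $k$ be a value that realizes the cop throttling number, so that $\thc(G) = k + \capt_k(G)$, and let $S$ be a capture set of size $k$ with $\capt(G;S) = \capt_k(G)$. Setting $t = \capt_k(G)$, the same set $S$ witnesses $\thcx(G) \le \thcx(G;S) = k(1+t)$. Thus it suffices to show
\[
k(1+t) \;\le\; \left\lfloor \frac{(k+t+1)^2}{4} \right\rfloor.
\]

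The key step is the AM-GM inequality applied to the two nonnegative numbers $k$ and $t+1$:
\[
k(t+1) \;\le\; \left(\frac{k+(t+1)}{2}\right)^{\!2} \;=\; \frac{(k+t+1)^2}{4}.
\]
Because $k(t+1)$ is a nonnegative integer, we can replace the right-hand side by its floor, obtaining $k(1+t) \le \lfloor (k+t+1)^2/4 \rfloor = \lfloor (\thc(G)+1)^2/4 \rfloor$, and combining with the bound on $\thcx(G)$ completes the proof.

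I do not anticipate any real obstacle: the argument is a one-line application of AM-GM once the correct $k$ is chosen, and the only thing to verify is that we are allowed to use the same capture set for both parameters, which is immediate from the definitions of $\thc(G;S)$ and $\thcx(G;S)$ given in the introduction.
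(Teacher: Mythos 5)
Your proposal is correct and follows essentially the same route as the paper: cite Remark~\ref{sum=prod} for the first inequality, then bound $\thcx(G)$ by $k(1+\capt_k(G))$ for a $k$ realizing $\thc(G)$ and apply AM-GM to $k$ and $1+\capt_k(G)$. Your explicit remark that the floor can be taken because the left-hand side is an integer is a small point the paper leaves implicit, but the argument is the same.
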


\begin{proof}
The first inequality is justified in Remark \ref{sum=prod}.  

Let $k$ and $\ell$ be integers such that $\thc(G,k)=\thc(G)$ and $\thcx(G,\ell)=\thcx(G)$. Then by the definitions of sum and product throttling, and the arithmetic mean-geometric mean (AM-GM) inequality,
\[
\thcx(G)=\ell(1+\capt_\ell(G))\leq k(1+\capt_k(G))\leq \left(\frac{k+(1+\capt_k(G))}2\right)^2=  \frac{(1+\thc(G))^2}4. \qedhere
\]
\end{proof}
 
\begin{cor}\label{thcthcxLow} Let $G$ be a graph.
\ben[$(1)$]
	\item\label{thcx1} $\thcx(G)=1$ if and only if  $ \thc(G)=1$ if and only if $G=K_1$.  
	\item\label{thcx2} $\thcx(G)=2$ if and only if $ \thc(G)=2$ if and only if either $G=2K_1$ or $\gamma(G)=1$.  
	\item\label{thcx3} $\thcx(G)=3$ if and only if   
	 	$G$ satisfies one of the following conditions:
		\begin{enumerate}[(a)]
			\item \label{30} $G=3K_1$ or $G=K_1\du K_2$.
			\item \label{12} $\gamma(G)\ge 3$ and there exists $z\in V(G)$ such that
 				\ben[(i)]
 					\item \label{12a} for all $v\in V(G)$, $\dist(z,v)\le 2$, and 
					 \item \label{12b} for all $w\in V(G)\setminus N[z]$, there is a vertex $u\in N[z]$ such that $N[w]\subset N[u]$.  This condition says that for $w\in V(G)\setminus N[z]$ there is a vertex $u\in N(z)$ such that $w$ is cornered by $u$ (see Section \ref{s:chord}). 

				\een
		\end{enumerate}
	\item\label{thcx4} $\thcx(G)=4$ if and only if $G$ satisfies one of the following conditions:
		\ben[(a)]
			\item $|V(G)|=4$ and $\gamma(G)\ge  2$.
			\item $\gamma(G)= 2$ and $|V(G)|\ge 4$.
			\item $c(G)=1$ and $\capt(G)=3$.
		\een
\een
\end{cor}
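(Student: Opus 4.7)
The plan is to treat each of the four items by enumerating the possible factorizations of the target value $c$ as $|S|(1 + \capt(G;S))$ and, for each factorization, characterizing the graphs which attain it while ensuring no smaller product is achievable. The forward direction comes from the combinatorial meaning of each factor, and the reverse from exhibiting an explicit cop placement.

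For (1), the unique factorization $1 = 1 \cdot 1$ forces $|S|=1$ and $\capt(G;S)=0$, giving $G = K_1$. For (2), the factorizations $1 \cdot 2$ and $2 \cdot 1$ correspond respectively to a single dominating cop (i.e.\ $\gamma(G)=1$) and two cops immediately covering $V(G)$ (so $|V(G)|=2$; excluding $K_2$ gives $G=2K_1$). The stated equivalence with the analogous $\thc(G)$ conditions follows from Remark \ref{sum=prod}, since in these low-throttling regimes $\thc$ and $\thcx$ coincide.

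For (3), I would enumerate the factorizations $1 \cdot 3$ and $3 \cdot 1$. The latter forces $|V(G)| = 3$ with no dominating set of smaller product, giving exactly $G = 3K_1$ or $G = K_1\du K_2$ (case (3a)). The former requires a single cop at $z$ achieving $\capt(G;\{z\}) = 2$. The key observation is: for a robber at $w$ with $d(z,w)=2$, the cop's first move $u\in N(z)$ must satisfy $N[w]\subseteq N[u]$, since otherwise the robber can escape to some vertex in $N[w]\setminus N[u]$ and delay capture beyond round 2. Together with condition (i), which bounds the eccentricity of $z$ by 2, this yields (3b); the $\gamma(G)$ hypothesis ensures that no two-cop placement yields a strictly smaller product. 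Item (4) then follows by the parallel enumeration of factorizations $1\cdot 4$, $2\cdot 2$, and $4\cdot 1$, corresponding to the three subcases (4c), (4b), and (4a), respectively, with analogous consistency checks ensuring no competing factorization beats the listed one.

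The main obstacle I anticipate is the reverse direction of (3b): showing that $\capt(G;\{z\}) = 2$ forces condition (ii) to hold. The argument is to suppose (ii) fails for some $w$ at distance 2 from $z$, so that for every $u\in N(z)$ there exists $w'\in N[w]\setminus N[u]$; the robber starts at $w$, and after the cop moves to $u$ the robber flees to such a $w'$, which cannot be caught in the subsequent round, contradicting $\capt(G;\{z\}) = 2$. A secondary subtlety is calibrating the $\gamma(G)$ ranges in (3b) and (4) so that the stated conditions are jointly exhaustive and mutually consistent with the factorization analysis; this amounts to checking the size-based bounds $\thcx(G;S) \geq |S|$ and verifying that smaller factorizations are ruled out whenever the characterization claims equality.
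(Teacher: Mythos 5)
Your overall strategy---enumerate the factorizations of the target value as $|S|\cdot(1+\capt(G;S))$ and characterize the graphs attaining each pattern while ruling out smaller products---is exactly the paper's. The paper does less work case by case: parts (1) and (2) are deduced from Proposition \ref{thcthcxbounds} together with the characterization of $\thc(G)\in\{1,2\}$ in \cite{CRthrottle}, and the case $c(G)=1$, $\capt(G)=2$ in part (3) is not reproved but cited from the proof of Theorem 4.1 in \cite{CRthrottle}. Your direct cornering argument for condition (3)(b)(ii) --- that the cop's first move to $u\in N(z)$ must satisfy $N[w]\subseteq N[u]$, else the robber flees to a vertex of $N[w]\setminus N[u]$ --- is the right argument and is essentially what that citation contains, so reproving it is a legitimate, more self-contained route.

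There is, however, one concrete gap: your justification of the hypothesis $\gamma(G)\ge 3$ in (3)(b), namely that it ``ensures that no two-cop placement yields a strictly smaller product,'' does not hold up. For $|V(G)|\ge 3$, any two-cop placement already satisfies $\thcx(G;S)=2(1+\capt(G;S))\ge 4$, so two cops can never compete with the value $3$; the only thing that must be excluded is $\thcx(G)\le 2$, i.e.\ $\gamma(G)=1$ or $G\in\{K_1,2K_1\}$, and that is already forced by $\capt(G)=2>1$. Consequently, carrying out your plan honestly lands you at the condition ``$c(G)=1$ and $\capt(G)=2$'' (equivalently, conditions (i) and (ii) together with $\gamma(G)\ge 2$), not at (3)(b) as stated with $\gamma(G)\ge 3$. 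The path $P_4$ separates the two: it has $\gamma=2$, $c=1$, $\capt=2$, and $\thcx(P_4)=3$, yet fails the stated (3)(b). The clause $\gamma(G)\ge 3$ is inherited from the sum-throttling characterization of $\thc(G)=3$, where it separates the one-cop case from the competing case $\gamma(G)=2$ (which yields sum $3$ but product $4$); you need to either reconcile your derivation with the stated condition or explicitly flag the discrepancy rather than wave at ``calibrating the $\gamma(G)$ ranges.'' The remainder of the proposal, including the three-way enumeration for part (4), matches the paper's proof, and the consistency checks you defer there are routine.
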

\bpf  \eqref{thcx1} and \eqref{thcx2}:  For $r=1,2$, $\thc(G)=r$ if and only if  $\thc(G) = r$ follows from Proposition~\ref{thcthcxbounds}.  Graphs with $\thc(G) \in \{1, 2\}$ were characterized in \cite{CRthrottle}.

\eqref{thcx3}:  There are exactly two ways $\thcx(G;S)=3$ can be achieved: $|V(G)|=|S|=3$ or both $c(G)=1$ and $\capt(G)=2$.   Requiring  $|V(G)|=|S|=3$ and $\thcx(G)>2$ is equivalent to $G=3K_1$ or $G=K_1\du K_2$. 
 It is shown in the proof of Theorem 4.1 in \cite{CRthrottle} that the graphs for which $c(G)=1$ and $\capt(G)=2$ are those in \eqref{thcx3}(b). 
 
\eqref{thcx4}:  There are exactly three ways $\thcx(G;S)=4$ can be achieved:  $|V(G)|=|S|=4$, $\gamma(G)=2$, or both $c(G)=1$ and $\capt(G)=3$. To ensure $\thcx(G)\ge 4$, if $|V(G)|=4$ we need the condition $\gamma(G)\ge  2$, and if $\gamma(G)= 2$ we need the condition $|V(G)|\ge 4$.   \epf

We now turn our attention to determining the product cop throttling number for chordal graphs. We find the following characterization of connected chordal graphs useful   \cite[Proposition 5.5.1]{GTbook}.  
	A connected chordal graph $G$  can be built successively by adding cliques with vertex sets  $X_1, \dots, X_k$ in such a way that $X_i\cap (\cup_{j=1}^{i-1} X_j)\ne \emptyset$ and there exists an $\ell$ with $1\le \ell\le i-1$ such  that $X_i\cap(\cup_{j=1}^{i-1} X_j)\subseteq X_\ell$. This implies $G[\cup_{j=1}^i X_j]$ is a connected chordal graph, $X_i\cap(\cup_{j=1}^{i-1} X_j)$ induces a clique,  and $V(G)=\cup_{j=1}^k X_j$. We will call the ordered sets $X_1,\dots, X_k$ a \emph{clique decomposition} of $G$.

\begin{lem}\label{lemma geodesic in chordal graph}
	Let $P$ be a geodesic in a connected chordal graph $G$. Then $P$ can be obtained from $G$ by repeated corner deletions.
\end{lem}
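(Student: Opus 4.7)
The plan is to proceed by induction on $|V(G)|$. In the base case $V(G)=V(P)$, no deletions are needed. For the inductive step I will assume $V(G)\supsetneq V(P)$ and exhibit a corner $w\in V(G)\setminus V(P)$. Once such a $w$ is found, $G-w$ is chordal (chordality is inherited by induced subgraphs) and $P$ is still a geodesic in $G-w$ (removing a vertex outside $V(P)$ cannot shorten any distance), so the inductive hypothesis finishes the reduction.

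To locate the corner I would first look for a simplicial vertex of $G$ in $V(G)\setminus V(P)$; if one exists it is automatically a corner. Otherwise every simplicial vertex of $G$ lies in $V(P)$, and since an interior vertex $v_i$ of the geodesic $P=v_0,\dots,v_d$ has $d_G(v_{i-1},v_{i+1})=2$ (so its $P$-neighbors are nonadjacent and cannot coexist in the clique $N_G(v_i)$), the simplicial vertices of $G$ must lie in $\{v_0,v_d\}$. In this remaining case I would take $w$ to be a simplicial vertex of the nonempty chordal subgraph $G[U]$, where $U=V(G)\setminus V(P)$. Write $A=N_G(w)\cap V(P)$ and $B=N_G(w)\cap U$, so $B$ is a clique of $G$. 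Using the geodesic property together with chordality, I would first show that $A$ consists of at most three consecutive vertices $v_i,v_{i+1},v_{i+2}$ of $P$: if $w\sim v_i$ and $w\sim v_j$ with $j>i$ then the path $v_i w v_j$ of length $2$ forces $j-i\leq 2$, and when $j-i=2$, chordality of the $4$-cycle $v_i v_{i+1} v_j w$ forces the chord $w v_{i+1}$.

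With this structural information in hand, I would show $w$ is a corner of $G$ by case analysis on $|A|$. When $N_G(w)=A\cup B$ is a clique, $w$ is simplicial in $G$, contradicting the standing assumption; otherwise chordality applied to $4$-cycles formed by $w$ and a nonadjacent pair in its neighborhood will pin down the cornering vertex. Specifically, for $|A|=3$, I would argue that chordality forces the middle vertex $v_{i+1}$ to be adjacent to every $b\in B$, so $v_{i+1}$ corners $w$; for $|A|=2$ with $A=\{v_i,v_{i+1}\}$, chordality on any $4$-cycle $v_i b_1 b_2 v_{i+1}$ (with $b_1,b_2\in B$) would rule out split attachments and force every $b\in B$ to be adjacent to at least one of $v_i,v_{i+1}$ uniformly, so that vertex corners $w$; for $|A|=1$ with $A=\{v_i\}$, an iterated chord-chasing argument ruling out $b$-to-$v_{i\pm k}$ adjacencies for any $b\in B$ with $b\not\sim v_i$ would eventually force $b$ to be simplicial in $G$, again contradicting the standing assumption. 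The main obstacle will be this case analysis, particularly the $|A|=1$ subcase, where the chord-forcing must be carried out on successively longer cycles to eliminate all the alternatives and thereby corner $w$; as a cleaner alternative one could invoke the general fact that any retract of a chordal graph is obtainable by iterated corner deletions, using that $P$ is a retract as noted in the excerpt.
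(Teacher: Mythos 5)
Your inductive framing is sound as far as it goes (a different route from the paper's), and several steps are correct: a simplicial vertex of $G$ outside $V(P)$ is a corner; deleting a corner preserves connectivity, chordality, and the geodesic property of $P$; and $A=N_G(w)\cap V(P)$ is indeed a set of at most three consecutive vertices of $P$. The gap is in the heart of the argument: a simplicial vertex $w$ of $G[U]$ need not be a corner of $G$, and in particular your claim in the case $|A|=3$ that chordality forces the middle vertex $v_{i+1}$ to be adjacent to every $b\in B$ is false. Take $P=v_0v_1v_2v_3v_4$ and add two vertices $w,b$ with edges $wv_1$, $wv_2$, $wv_3$, $wb$, $bv_3$, $bv_4$. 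This graph is chordal (eliminate $v_0,v_1,v_2,v_4$ in order, leaving the triangle $v_3wb$), $P$ is a geodesic, the only simplicial vertices of $G$ are $v_0$ and $v_4$ (so your ``remaining case'' applies), and $w$ is simplicial in $G[U]$ with $A=\{v_1,v_2,v_3\}$ and $B=\{b\}$. Here $b\not\sim v_2$, and in fact no vertex corners $w$: a cornering vertex would have to lie in $N(w)$ and be adjacent to all of $v_1$, $v_3$, and $b$, and none of $v_1,v_2,v_3,b$ qualifies. So with this admissible choice of $w$ the induction cannot advance. The lemma still holds for this graph ($b$ is cornered by $v_3$, and after deleting $b$, $w$ is cornered by $v_2$), which shows the fix requires a more careful selection of the vertex to delete, not just ``any simplicial vertex of $G[U]$.'' I would also expect trouble in your $|A|=1$ and $|A|=2$ subcases for similar reasons, since nothing prevents a vertex of $B$ from attaching to $P$ far from $A$ only through longer triangulated paths.

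Your proposed fallback --- that every retract of a connected chordal graph is obtainable by iterated corner deletions --- would indeed finish the proof since a geodesic is a retract, but you assert it without proof or reference, and it is a statement of essentially the same strength as the lemma itself, so it cannot simply be invoked. For contrast, the paper sidesteps the local case analysis entirely by using a clique decomposition $X_1,\dots,X_k$ of $G$ ordered so that the cliques containing the edges of $P$ come first: vertices private to a last clique are corners (cornered by a vertex in the intersection with earlier cliques), which peels $G$ down to the union of the cliques meeting $P$, and then vertices in $X_i\setminus(X_{i-1}\cup X_{i+1})$ and repeated vertices of $X_i\cap X_{i+1}$ are corners. That global ordering is what guarantees a corner always exists outside $P$, which is exactly what your local argument fails to deliver.
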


\begin{proof}
	Let $e_1,\dots,e_d$ be the edges of $P$ (indexed in order), and let $X_1,\dots,X_k$ be a clique decomposition of $G$. We can assume without loss of generality that $e_i\in X_i$ for $1\leq i\leq d$ since $P$ is a geodesic. Now, let $z\in X_k\setminus \left(\bigcup_{i=1}^{k-1}X_i\right)$ and $w\in X_k\cap \left(\bigcup_{i=1}^{k-1}X_i\right)$. Note that $N[z]\subseteq N[w]$, so $z$ is a corner. This implies that the graph $H=G[X]$ with $X=\bigcup_{i=1}^dX_i$  (i.e., the cliques that contain the path $P$) can be obtained from $G$ by repeated corner deletions. 
	
  Now  suppose there exists a vertex $z\in X_i\setminus (X_{i-1}\cup X_{i+1})$ for some $i$ with $1\leq i\leq d$ (assume for simplicity $X_0=X_{d+1}=\emptyset$).  Then  $N[z]=X_i\subseteq N[w]$ for $w\in X_i\cap (X_{i-1}\cup X_{i+1})$. This implies $z$ is a corner, and this property is preserved when deleting vertices from the set $X_i\setminus (X_{i-1}\cup X_{i+1})$. Similarly, if $|X_i\cap X_{i+1}|\geq 2$ for some $1\leq i<d$, then for any $w,z\in X_i\cap X_{i+1}$, we have $N[w]=N[z]$, and so they are both corners. Thus, $P$ can be obtained from $H$ via repeated corner deletions, completing the proof.
\end{proof}

We can now determine exactly the value of $\thcx(G)$ for connected chordal graphs.

\begin{thm}\label{theorem multiplicative chordal bound}
	For any connected chordal graph $G$, $\thcx(G)=1+\rad(G)$.
\end{thm}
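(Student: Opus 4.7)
The plan is to prove the upper and lower bounds separately. For the upper bound, placing a single cop at a center $c$ of $G$ yields $\capt(G;\{c\}) = \max_v d(v,c) = \rad(G)$ by Theorem~\ref{theorem chordal capture is radius}, so $\thcx(G) \le 1\cdot(1 + \rad(G)) = 1 + \rad(G)$.

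The lower bound will follow from a claim that is actually true for every connected graph $G$: if $S \subseteq V(G)$ is a multiset with $|S|=k$ and $r := \max_{v \in V(G)} d(v,S)$, then $\rad(G) \le k(r+1) - 1$. This suffices because, by Corollary~\ref{cor chordal capture is radius}, $\capt(G;S) = \max_v d(v,S)$ whenever $G$ is chordal, so applying the claim yields $|S|(1 + \capt(G;S)) \ge 1 + \rad(G)$ for every capture set $S$, and therefore $\thcx(G) \ge 1 + \rad(G)$.

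To prove the claim I will build a small subtree of $G$ containing $S$ and use its center as a globally central vertex. Define an auxiliary graph $H$ on vertex set $S$ by joining $s_i s_j$ whenever $d_G(s_i,s_j) \le 2r+1$. Tracing any $G$-geodesic between two elements of $S$ and using that each vertex of the geodesic lies within $G$-distance $r$ of some element of $S$, consecutive ``owners'' along the geodesic are at $G$-distance at most $2r+1$ by the triangle inequality, so $H$ is connected. Fix a spanning tree of $H$, replace each of its $k-1$ edges by a $G$-geodesic of length at most $2r+1$, and let $T^*$ be a spanning subtree of the resulting subgraph of $G$. Then $T^*$ is a tree in $G$ containing $S$ with $|V(T^*)| \le 1 + (k-1)(2r+1)$, and hence $\rad(T^*) \le \lceil (|V(T^*)|-1)/2 \rceil \le \lceil (k-1)(2r+1)/2 \rceil$.

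Let $m$ be a center of $T^*$. For any $v \in V(G)$, choose $s_i$ with $d_G(v,s_i) \le r$; then
\[
d_G(m,v) \;\le\; d_G(m,s_i) + d_G(s_i,v) \;\le\; d_{T^*}(m, s_i) + r \;\le\; \left\lceil (k-1)(2r+1)/2 \right\rceil + r,
\]
so $\rad(G) \le \lceil (k-1)(2r+1)/2 \rceil + r$. A short parity case analysis shows that this upper bound is at most $k(r+1)-1$ for every $k \ge 1$, which completes the proof of the claim and hence of the theorem. The main obstacle is the construction of $T^*$ together with the ceiling/parity bookkeeping that finishes the inequality; chordality enters only through Corollary~\ref{cor chordal capture is radius}, which converts the purely distance-theoretic claim into a statement about capture time.
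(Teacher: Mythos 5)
Your proof is correct, and your lower bound takes a genuinely different route from the paper's. The paper reduces to a diametric path $P=P_{\diam(G)+1}$: it shows via Lemma~\ref{lemma geodesic in chordal graph} that $P$ arises from $G$ by repeated corner deletions, deduces $\capt_k(P)\le\capt_k(G)$ from Lemma~\ref{lemma disjoint corner removal}, and then bounds $\thcx(P;S)$ directly using $\rad_k(P)\ge\frac{d+1-k}{2k}$. You instead prove a purely metric inequality valid for \emph{every} connected graph --- if a $k$-element multiset $S$ has covering radius $r=\max_v d(v,S)$, then $\rad(G)\le k(r+1)-1$ --- and invoke chordality only through Theorem~\ref{theorem chordal capture is radius}, which converts $\capt(G;S)$ into $\max_v d(v,S)$. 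Your construction is sound: the auxiliary graph $H$ is connected because consecutive ``owners'' along a geodesic between two elements of $S$ are at distance at most $2r+1$; the tree $T^*$ has at most $1+(k-1)(2r+1)$ vertices since each geodesic added along a spanning tree of $H$ contributes at most $2r+1$ new vertices; and the arithmetic $\left\lceil (k-1)(2r+1)/2\right\rceil+r=kr+\left\lceil (k-1)/2\right\rceil\le kr+k-1$ checks out (in fact you prove the slightly stronger bound $\rad(G)\le kr+\lceil(k-1)/2\rceil$). As for what each approach buys: the paper's route is shorter given the corner-deletion machinery already developed, while yours is self-contained modulo the capture-time theorem and yields a radius-versus-$k$-covering-radius inequality of independent interest. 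Notably, your argument works directly with $\rad(G)$, whereas the paper's final step needs $\rad(G)\le\frac{\diam(G)+1}{2}$ (its size-one case only gives $\thcx(P;S)\ge 1+\rad(P)$ with $\rad(P)=\lceil\diam(G)/2\rceil$); that relation holds for trees but can fail for chordal graphs with $\diam(G)=2\rad(G)-2$, such as the $3$-sun, where $\rad=\diam=2$ and $\thcx(P_3)=2<3=1+\rad(G)$. Your proof sidesteps this subtlety entirely.
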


\begin{proof}
	Applying Corollary \ref{cor chordal capture is radius} with $k=1$ yields $\thcx(G)\le 1+\rad(G)$.	
	For the reverse inequality, let $d=\diam(G)$, and $P=P_{d+1}$ be a diametric path in $G$. Since diametric paths are necessarily geodesics, we know via Lemma \ref{lemma geodesic in chordal graph} that $P$ can be obtained from $G$ by repeated corner deletions. Then, applying Lemma \ref{lemma disjoint corner removal} gives us that $\capt_k(P)\leq \capt_k(G)$ for any $k$, and so $\thcx(P)\leq \thcx(G)$. By Corollary \ref{cor chordal capture is radius}, $\capt_k(P)=\rad_k(P)\geq\frac{d+1-k}{2k}$, where the inequality comes from the fact that $k(2\rad_k(P)+1)\geq d+1$. Then for all $S\subseteq V(P)$ with $|S|\geq 2$,
	\bea
	\thcx(P;S)&\geq &|S|\left(1+\frac{d+1-|S|}{2|S|}\right)\\
	&=&|S|+\frac{d+1-|S|}{2}\\
	&=&\frac{|S|}2+\frac{d+1}2\\
	&\geq & 1+\rad(G).
	\eea
	By Corollary \ref{cor chordal capture is radius},  $\thcx(P;S)\ge 1+\rad(P)$ also holds for any $S\subseteq V(P)$ of size $1$.
\end{proof}

%In   cop throttling, the minimum often occurs when the number of cops and the capture time are approximately equal. In contrast, it appears that for $\thcx(G)$, the minimum is often achieved  when the number of cops is as small as possible, i.e., $c(G)$, and the capture time may be larger. 
In examples for which  the cop throttling number has been determined, the minimum often occurs when the number of cops and the capture time are approximately equal. In contrast, for graphs $G$ for which    $\thcx(G)$ has been determined, the minimum is often achieved  when the number of cops is as small as possible, i.e., $c(G)$, and the capture time may be larger. 
For example, it follows from Theorem  \ref{theorem multiplicative chordal bound} that the product cop throttling number for a path on $n$ vertices is achieved with one cop  while the capture time is $\lfloor\frac{n}{2}\rfloor$. This is in sharp contrast to   cop throttling for a path, where approximately $\sqrt{\frac n 2}$ cops are used to realize the cop throttling number and the capture time is also approximately $\sqrt{\frac n 2}$  \cite{CRthrottle}.
Further, it can also be the case that in realizing $\thcx(G)$  it is best to have a small capture time and  a larger number of cops, i.e., capture time equal to one with $\gamma(G)$ cops.  An example of this is provided by a graph in the family $H(n)$ defined in \cite{BGHK09}, where it is shown that $\capt_1(H(n))=n-4$.  For $H(11)$, shown in Figure \ref{fig:H11}, $\capt_1(H(11))=7$, but vertices 5 and 7 dominate the graph, so  $\thcx(H(11))=4$ and $\thc(H(11))=3$.  
However, this is not always the case and the next example provides a family of graphs $G$ for which both $\thcx(G,c(G))>\thcx(G)$ and $\thcx(G,\gamma(G))>\thcx(G)$ for sufficiently large order.

\begin{figure}[h]
\centering
\scalebox{.4}{\includegraphics{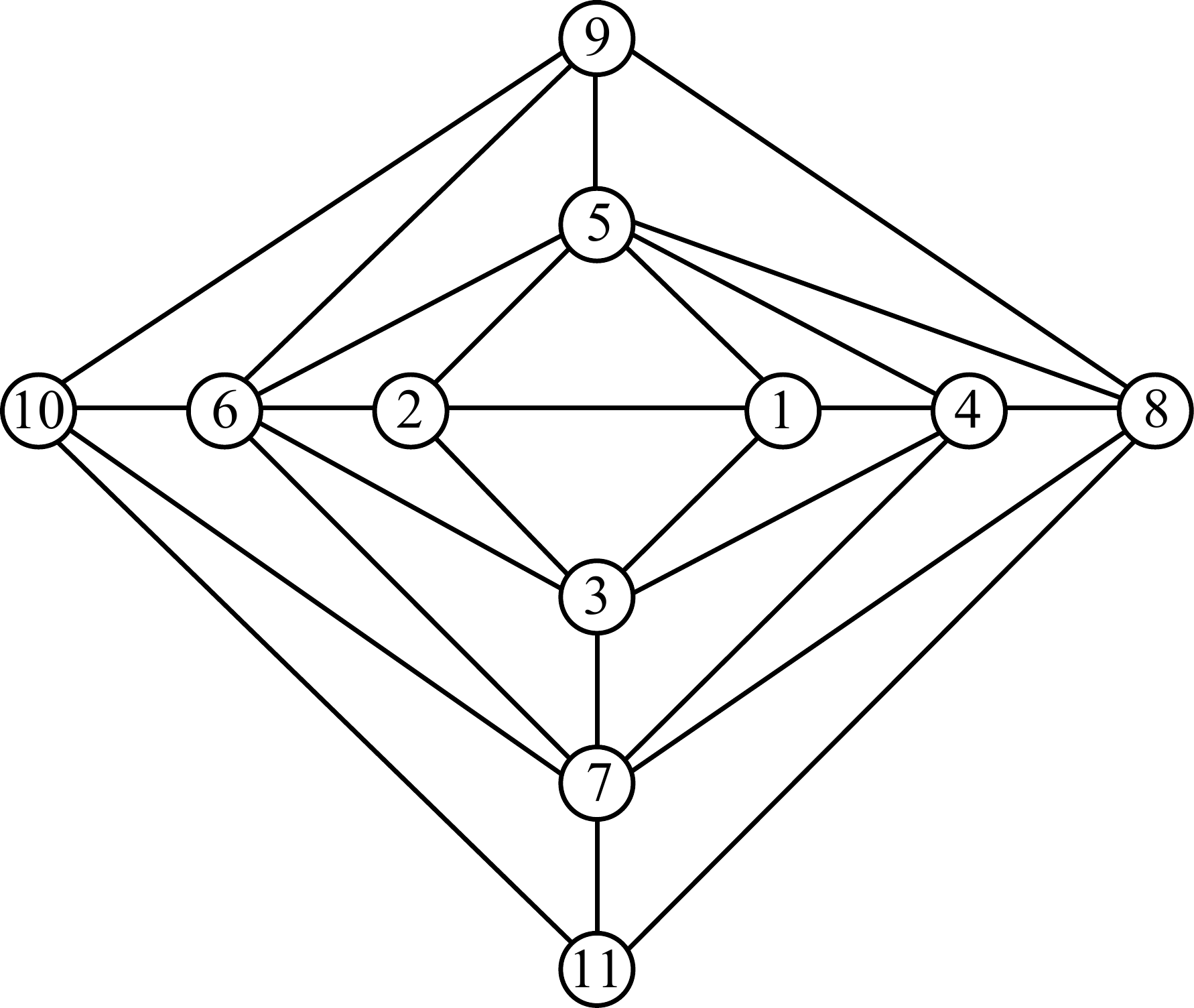}}
\caption{The graph $H(11)$.  \label{fig:H11}}
\end{figure}

\begin{ex} Fix a positive integer $\ell$, and let $M'(\ell)$ be the graph obtained from the disjoint union of $C_4$ with three copies of $P_\ell$ by pairing the three paths with three distinct vertices of $C_4$ and adding an edge from an endpoint of each path to the paired vertex of $C_4$.  Let  $M(\ell)$ be the result of appending a leaf to every vertex of $M'(\ell)$.   The graph $M(3)$ is shown in Figure \ref{fig:L3}.  Note that the order of $M(\ell)$ is $6\ell+8$.
\end{ex} 

\begin{figure}[h]
\centering
\scalebox{.45}{\includegraphics{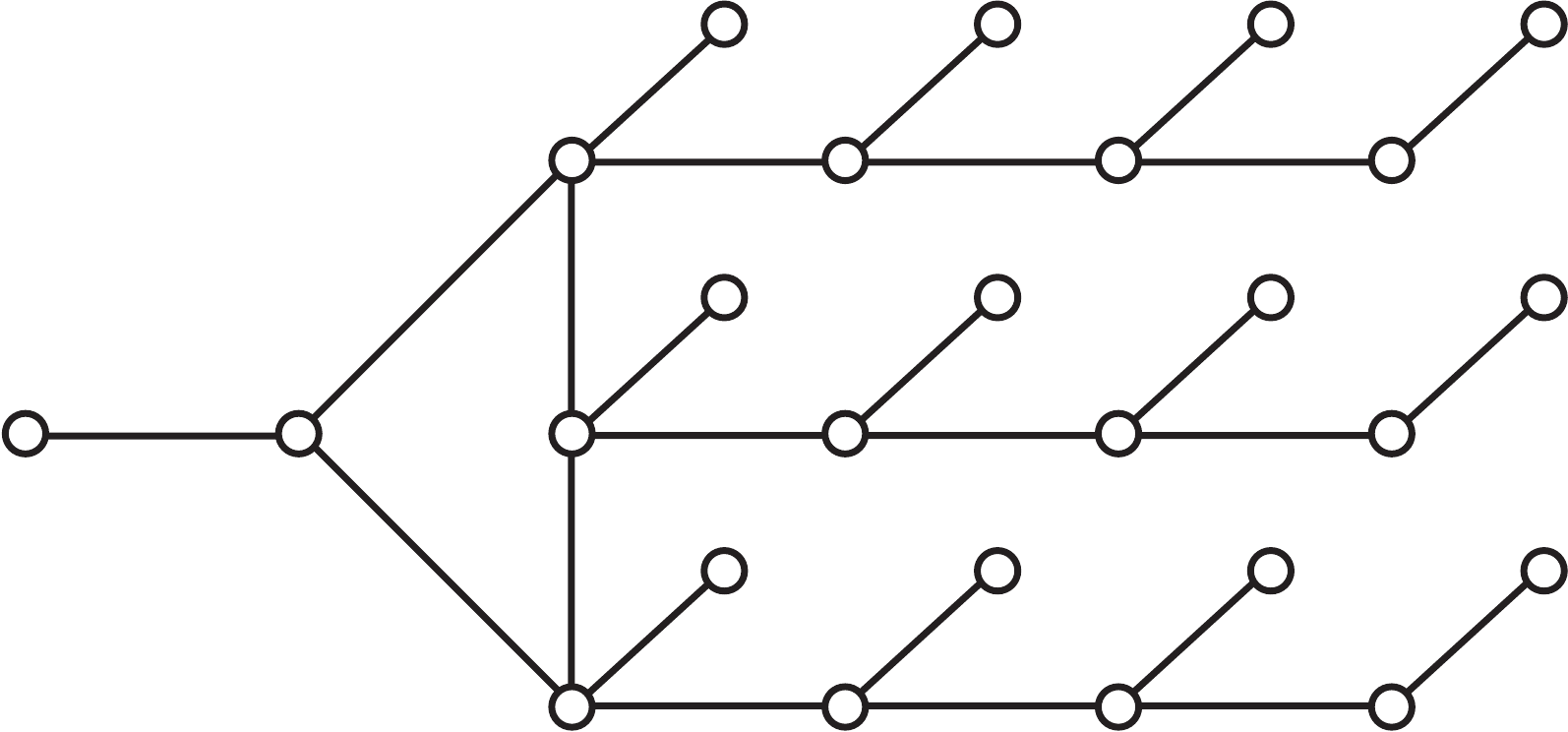}}
\caption{The graph $M(3)$. \vspace{-8pt} \label{fig:L3}}
\end{figure}

\begin{thm}
 There exist infinitely many graphs $G$ for which $\thcx(G)$ is not achieved by any set of size $\gamma(G)$ nor by any set of size $c(G)$.   In particular, this is the case for $G=M(\ell)$ with $\ell\ge 7$.
\end{thm}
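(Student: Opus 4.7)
The plan is to evaluate $\thcx(M(\ell))$ at the two extremes $k = c(M(\ell))$ and $k = \gamma(M(\ell))$, then to exhibit an intermediate 3-cop placement that strictly beats both when $\ell \geq 7$.

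First I would determine the two graph parameters. The $C_4$ of $M'(\ell)$ remains induced in $M(\ell)$ since the pendant leaves on its four cycle vertices prevent any chord, so $c(M(\ell)) \geq 2$; placing two cops at opposite vertices of this $C_4$ and chasing the robber into whichever attached path it hides in gives a winning 2-cop strategy, so $c(M(\ell)) = 2$. For $\gamma(M(\ell))$, each of the $3\ell + 4$ non-leaf vertices is paired with its unique pendant-leaf neighbor, and any dominating set must contain at least one vertex from each of these disjoint pairs, giving $\gamma(M(\ell)) \geq 3\ell + 4$; the set of all non-leaf vertices achieves equality.

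Next, I would evaluate $\thcx$ at these extremes. The set of non-leaf vertices is a dominating set of size $3\ell + 4 < |V(M(\ell))|$, so its capture time is $1$, giving $\thcx(M(\ell), \gamma(M(\ell))) = 6\ell + 8$. For the cop number, I claim $\capt_2(M(\ell)) = \ell + 2$. The upper bound is realized by placing cops at two opposite cycle vertices (say $a$ and $c$): the worst-case robber hides at the pendant leaf of the far end of the path attached to the third junction $b$, at distance $\ell + 2$ from both cops, and a direct chase along this path captures in exactly $\ell + 2$ rounds. The matching lower bound is a pigeonhole argument: with only 2 cops among the three path-junctions $a, b, c$, at least one junction has minimum cop-distance $\geq 1$, so the far-end pendant leaf of the attached path has minimum cop-distance at least $\ell + 2$. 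Thus $\thcx(M(\ell), 2) = 2\ell + 6$.

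Third, I would exhibit a 3-cop set $S$ with one cop in each of the three attached paths, located at distance $m = \lfloor (\ell - 1)/2 \rfloor$ from its junction. A direct distance computation shows that every vertex of $M(\ell)$ is within distance $\lceil (\ell + 3)/2 \rceil$ of $S$, with the maximum attained either at the far-end pendant leaf of a path or at the pendant leaf of the pathless cycle vertex $d$. I would then argue that $\capt(M(\ell); S) \leq \lceil (\ell + 3)/2 \rceil$ by case analysis: a robber in a path is pushed by the in-path cop toward the dead-end pendant leaf at the far end; a robber that tries to escape through the cycle finds that after $m$ rounds all three cops have converged on $\{a, b, c\}$, at which point the remaining reachable positions (namely $d$, its pendant leaf, and the pendant leaves of $a, b, c$) are dead-ends and can be cleared in at most two further rounds. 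Hence $\thcx(M(\ell); S) = 3\lceil (\ell + 5)/2 \rceil$, and a parity-by-parity check establishes that for $\ell \geq 7$ this is strictly smaller than both $2\ell + 6$ and $6\ell + 8$.

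The main obstacle will be rigorously verifying that the capture time equals the radius for the 3-cop set $S$, since $M(\ell)$ is not chordal and Theorem~\ref{theorem chordal capture is radius} does not apply directly. The crux is a dead-end analysis: once the three cops reach $\{a, b, c\}$, every remaining uncaptured vertex is either $d$, its pendant leaf, or a pendant leaf of $a, b, c$, all dead-ends from which the robber can survive at most two further rounds, matching the claimed radius exactly.
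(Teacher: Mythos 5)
Your proposal follows essentially the same route as the paper: compute $c(M(\ell))=2$ and $\gamma(M(\ell))=3\ell+4$, lower-bound $\capt_2$ by $\ell+2$ via pigeonhole, note that any dominating set of size $3\ell+4$ gives product $6\ell+8$, and beat both with three cops placed one per path at the balanced position, yielding $3\bigl(1+\lceil(\ell+3)/2\rceil\bigr)<2(\ell+3)$ for $\ell\ge 7$. Two spots in your write-up do not quite close as stated, however.

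First, your two-cop lower bound. You argue that some junction has cop-distance at least $1$ and conclude that the far pendant leaf of \emph{its} path has cop-distance at least $\ell+2$. That inference fails if a cop sits inside that branch but off the junction: with $\ell=7$ and both cops at the fourth vertices of paths $1$ and $2$, all three junctions have cop-distance at least $1$, yet the far leaves of paths $1$ and $2$ are at distance $4$ from their cops. The correct pigeonhole (and the one the paper uses) is on the three \emph{branches} (junction, attached path, and their pendant leaves): some branch contains no cop, and every vertex outside that branch is at distance at least $\ell+2$ from its far pendant leaf because every such path must pass through the junction. Also note you only need the lower bound $\capt_2\ge\ell+2$; your claimed matching upper bound is unnecessary and itself unproven. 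Second, your endgame for the three-cop set splits into ``robber in a path'' (pushed to the far dead end) versus ``robber escaping through the cycle'' (cops converge on $a,b,c$ after $m$ rounds, leaving only $d$ and the cycle's pendant leaves). A robber sitting on a pendant leaf of a path vertex \emph{between} its cop and the junction falls into neither case cleanly: it is not pushed outward, and if the cop marches past it to the junction it can re-enter the now-unguarded far half of the path. The fix is an adaptive case analysis on the robber's revealed starting side of each cop (far side: tree-chase into the caterpillar, at most $\ell-m+1$ rounds; near side: the responsible cop chases toward the cycle, cornering the robber on a leaf or squeezing it onto the cycle where the other two cops finish in at most $m+2$ rounds). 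To be fair, the paper is equally terse on this verification, but since you explicitly flag it as the crux, the case split should be airtight.
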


\begin{proof}
The cop number of $M(\ell)$ is two, since it is unicyclic and it does not have a universal vertex.  Suppose first that we use two cops.  Consider a part consisting of a vertex of $C_4$, its attached path, and the adjacent leaves.  Observe that one of these three parts must start without a cop on it.  The distance between any vertex not in this part and the leaf attached to the path endpoint at distance $\ell$ from the $C_4$ is at  least $\ell+2$,  so  
$\capt_2(M(\ell))\geq \ell+2$ and $\thcx(M(\ell),c(M(\ell)))\ge2(\ell+3)$.

It is immediate that  $\gamma(M(\ell))=3\ell+4$ because each leaf or its neighbor must be in a dominating set, and the capture time for a dominating set that does not include all vertices is one, so $\thcx(M(\ell),\gamma(M(\ell)))=(3\ell+4)(1+1)> 2(\ell+3)$.

Now, let $S$ consist of the three vertices on the three paths each at distance $\lc\frac{\ell+3}2\rc$ from the vertex at the end of the original $P_\ell$. %not attached to the $C_4$. 
Then $\capt(M(\ell);S)= \lc\frac{\ell+3}2\rc$ since every vertex is within distance $\lc\frac{\ell+3}2\rc$ of a cop, and the cops can clear the entire graph in this many rounds.  Thus,  $\thcx(M(\ell),3)\leq 3\lp1+\lc\frac{\ell+3}2\rc\rp<2(\ell+3)$ for $\ell\ge 7$.
\end{proof}

Finally, we show that if the upper bound in Proposition \ref{thcthcxbounds} is tight for a graph $G$, then there are specific restrictions on the number of cops than can be used in $G$ to realize $\thc(G)$. %These restrictions, which are stated in the next proposition, essentially characterize when equality holds in an integer two-item version of the AM-GM inequality. %Let $G$ be a graph. 
To facilitate a discussion of these restrictions, we define some terms. An ordered pair $(k, p)$ is a \emph{throttling point of $G$} if there exists a capture set $S \subseteq V(G)$ such that $|S| = k$ and $\capt(G; S) = p$. Suppose $(k,p)$ is a throttling point of $G$. Then $(k,p)$ is \emph{sum-minimum} if $k + p = \thc(G)$ and $(k,p)$ is \emph{product-minimum} if $k(1+p) = \thcx(G)$.  If $(k,p)$ is sum-minimum (respectively, product-minimum), then $\thc(G)=\thc(G,k)$ (respectively, $\thcx(G)=\thcx(G,k)$).

\begin{prop}
Suppose $G$ is a graph with $\thc(G) = q$ and let $I(q)$ be a set of ordered pairs, defined as
\begin{eqnarray*}
I(q) =
\begin{cases}
    \left\{\left(\frac{q+1}{2}, \frac{q-1}{2}\right)\right\}, & \text{if } q \text{ is odd;}\\
    \left\{\left(\frac{q}{2}, \frac{q}{2}\right), \left(\frac{q+2}{2}, \frac{q-2}{2}\right)\right\}, & \text{if } q \text{ is even.}
\end{cases}
\end{eqnarray*}
Then $\thcx(G) = \left \lfloor \frac{(q+1)^2}{4} \right \rfloor$ if and only if every sum-minimum throttling point of $G$ is contained in $I(q)$ and one such throttling point is also product-minimum.
\end{prop}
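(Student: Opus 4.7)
The plan is to exploit the AM--GM step already used in the proof of Proposition \ref{thcthcxbounds} and to track exactly when equality is attained. First I would isolate the purely arithmetic lemma: for a fixed positive integer $q$, the maximum of $k(1+p)$ over positive integer pairs $(k,p)$ with $k+p=q$ equals $\lf\frac{(q+1)^2}{4}\rf$, and the set of maximizers is exactly $I(q)$. This is immediate from viewing $k(1+p)=k(q+1-k)$ as a downward-opening quadratic in $k$ whose real maximizer is $(q+1)/2$: for $q$ odd this value is an integer and yields the unique optimal pair $((q+1)/2,(q-1)/2)$, while for $q$ even the two nearest integers $q/2$ and $(q+2)/2$ are equidistant from $(q+1)/2$ and hence both yield the common maximum $\frac{q(q+2)}{4}=\lf\frac{(q+1)^2}{4}\rf$.

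For the forward direction, suppose $\thcx(G)=\lf\frac{(q+1)^2}{4}\rf$. Since $\thc(G)=q$ is attained, a sum-minimum throttling point $(k,p)$ exists. By the definition of product throttling, $\thcx(G)\le k(1+p)$, while the arithmetic lemma gives $k(1+p)\le \lf\frac{(q+1)^2}{4}\rf=\thcx(G)$. Thus $k(1+p)=\lf\frac{(q+1)^2}{4}\rf$, which by the equality characterization places $(k,p)\in I(q)$ and simultaneously makes $(k,p)$ product-minimum. The same argument applied to an arbitrary sum-minimum throttling point shows all of them must lie in $I(q)$.

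For the reverse direction, suppose every sum-minimum throttling point lies in $I(q)$ and some such point $(k,p)$ is also product-minimum. Then $\thcx(G)=k(1+p)$ by the product-minimum hypothesis, and $k(1+p)=\lf\frac{(q+1)^2}{4}\rf$ by the arithmetic lemma applied to $(k,p)\in I(q)$. Combining these equalities finishes the equivalence.

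No step poses a substantial obstacle; the only delicate point is the careful treatment of integer maximizers in the arithmetic lemma, which is precisely why $I(q)$ splits into two cases depending on the parity of $q$.
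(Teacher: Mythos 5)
Your argument is correct and rests on the same two ingredients as the paper's proof: the definition of $\thcx(G)$ as a minimum over throttling points, together with the arithmetic characterization of the integer maximizers of $k(1+p)$ subject to $k+p=q$ (the one small imprecision is that you should allow $p=0$ rather than restricting to positive pairs, since capture time zero occurs for $|S|=n$ and, e.g., $I(2)$ contains the pair $(2,0)$). If anything your organization is cleaner than the paper's: by squeezing each sum-minimum throttling point individually between $\thcx(G)$ and $\lf\frac{(q+1)^2}{4}\rf$, you get that every such point lies in $I(q)$ and is product-minimum in one stroke, bypassing the paper's intermediate step of arguing that the minimum and maximum of $x(1+q-x)$ over sum-minimum throttling points must coincide.
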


\begin{proof}
We have
\begin{eqnarray}%\label{prod-sum-bounds}
\nonumber
\thcx(G) &=& \min\{x(1+y) : (x,y) \text{ is a throttling point of }G\}\\ 
&\leq &\min\{x(1 + q-x) : (x, q-x) \text{ is a sum-minimum throttling point of }G\}\label{ineq1}\\ 
&\leq &\max\{x(1 + q-x) : (x, q-x) \text{ is a sum-minimum throttling point of }G\}\label{ineq2}\\ 
&\leq &\max\{x(1+y) : x,y \in \mathbb{N} \text{ and } x + y = q\} = \left \lfloor \frac{(q+1)^2}{4} \right \rfloor.  \label{ineq3}
\end{eqnarray}
Thus, $\thcx(G) = \left \lfloor \frac{(q+1)^2}{4} \right \rfloor$ if and only if every inequality above is an equality. It is clear that equality holds in \eqref{ineq1} if and only if there exists a sum-minimum throttling point of $G$ that is also product-minimum. Note that the maximum value of a finite set is equal to the minimum value of the set if and only if the set has exactly one element. Therefore, equality holds in \eqref{ineq2} if and only if $a(1+q-a) = b(1+q-b)$ for all sum-minimum throttling points $(a, q-a)$ and $(b, q-b)$ of $G$. If $a \neq b$, then $a(1+q-a) = b(1+q-b)$ if and only if $b = 1+q-a$. 
%\begin{eqnarray*} a(1+q-a) = b(1+q-b) &\Leftrightarrow& a(1+q) - a^2 = b(1+q) - b^2\\ &\Leftrightarrow& (a-b)(1+q) = a^2-b^2\\ &\Leftrightarrow& 1+q = a+b\\ &\Leftrightarrow& b = 1+q-a. \end{eqnarray*}
So \eqref{ineq2} is an equality if and only if there are only two possible sum-minimum throttling points (namely, $(a, q-a)$ and $(1+q-a, a-1)$ for some fixed $a \in \{1, 2, \ldots, q\}$). Finally, equality holds in \eqref{ineq3} if and only if an ordered pair $(x,y)$ that realizes $\max\{x(1+y) \ | \ x,y \in \mathbb{N} \text{ and } x + y = q\}$ is a sum-minimum throttling point of $G$. It is easy to see that the ordered pairs that realize $\max\{x(1+y) \ | \ x,y \in \mathbb{N} \text{ and } x + y = q\}$ are exactly the points in $I(q)$. Therefore, equality simultaneously holds in \eqref{ineq1}, \eqref{ineq2}, and \eqref{ineq3}  if and only if every sum-minimum throttling point of $G$ is a point in $I(q)$ and one of these throttling points is also product-minimum.
\end{proof}

 The definition of $I(q)$ in the previous proposition essentially
 characterizes when equality holds in an integer two-item version of  the AM-GM inequality.

%%%%%%%%%%%%%%%%%%%%%%%%%%%%%%%%%%%%%%%%%%%

%%%%%%%%%%%%%%%%%%%%%%%%%%%%%%%%%%%%%%%%%%%

%\section{\red Concluding remarks}\label{s:conclude}
%{\red say what we did and open questions}

%{\red Could mention here or after Cor. 3.3 that to complete the characterization of $\thcx(G)=4$ a characterization of graphs with capture time 3 is needed.}


\begin{thebibliography}{20}
\bibliography{Meyniel_throttling}

\bibitem{AF84} M.~Aigner, M.~Fromme. A game of Cops and Robbers. {\em Discrete Appl. Math.}, 8 (1984), 1--11.

 \bibitem{AF88} R.P.~Anstee, M.~Farber. On bridged graphs and cop-win graphs. {\em J.~Combin.~Theory Ser. B}, 44 (1988), 22--28. 

%\bibitem{msurv} W.~Baird, A.~Bonato. Meyniel's conjecture on the cop number: a survey. \url{https://arxiv.org/abs/1308.3385}

%\bibitem{AIM08} AIM Minimum Rank -- Special Graphs Work Group (F.~Barioli, W.~Barrett, S.~Butler, S.M.~Cioab\u{a}, D.~Cvetkovi\'c, S.M.~Fallat, C.~Godsil, W.~Haemers, L.~Hogben,  R.~Mikkelson,  S.~Narayan,  O.~Pryporova,   I.~Sciriha,  W.~So,   D.~Stevanovi\'c,  H.~van der Holst, K.~Vander Meulen,  A.~Wangsness).  Zero forcing sets and the minimum rank of graphs.   {\em Linear Algebra Appl.}, 428 (2008),  1628--1648.

%\bibitem{smallparam}	F.~Barioli, W.~Barrett, S.~Fallat, H.T.~Hall, L.~Hogben, B.~Shader, P.~van den Driessche, and H.~van der Holst. Zero forcing parameters and minimum rank problems. {\em Linear Algebra Appl.}, 433 (2010), 401--411.

%\bibitem{bessy2015bounds} S.~Bessy, A.~Bonato, J.~Janssen, D.~Rautenbach, E.~Roshanbin. Bounds on the burning number. {\em Discrete Appl. Math.} 235 (2018), 16--22.

\bibitem{BGHK09} A.~Bonato, P.~Golovach, G.~Hahn, J.~Kratochv\'il. The capture time of a graph. {\em Discrete Math.}, 309 (2009), 5588--5595.

%\bibitem{bonato2013capture} A.~Bonato, P.~Gordinowicz, W.B.~Kinnersley, P.~Pra\l at. The capture time of the hypercube. {\em Electron. Journal of Combin.}, 20 (2013), P24.

%\bibitem{bonato2014burning} A.~Bonato, J.~Janssen, E.~Roshanbin. Burning a graph as a model of social contagion. In {\em Algorithms and models for the web graph}, 13--22, Lecture Notes in Comput. Sci. 8882,   Springer, Cham,  2014.

%\bibitem{bonato2016how} A.~Bonato, J.~Janssen, E.~Roshanbin. How to burn a graph. {\em Internet Math.}, 12 (2016), 85--100.

\bibitem{topdir} A.~Bonato, B.~Mohar. Topological directions in cops and robbers. To appear in {\em Journal of Combinatorics}, available at \url{https://arxiv.org/pdf/1709.09050.pdf}.

\bibitem{CRbook} A.~Bonato, R.J.~Nowakowski. {\em The game of Cops and Robbers on graphs}.  American Mathematical Society, Providence, 2011.

\bibitem{BPPR17} A.~Bonato, X.~P\'erez-Gim\'enez, P.~Pra\l at, B.~Reiniger. The Game of overprescribed Cops and Robbers played on graphs.  {\em Graphs Combin.}, 57 (2017), 801--815.

\bibitem{BP} A.~Bonato, P.~Pra\l at. \emph{Graph Searching Games and Probabilistic Methods}. CRC Press, Boca Raton, 2017.
%\bibitem{restrict} C. Bozeman, B. Brimkov, C. Erickson, D. Ferrero, M. Flagg, L. Hogben. Restricted power domination and zero forcing problems.  Under review, available at\url{https://arxiv.org/abs/1711.05190}

\bibitem{CRthrottle}  J. Breen, B. Brimkov, J. Carlson, L. Hogben, K.E. Perry, C. Reinhart. Throttling for the game of Cops and Robbers  on graphs.   {\em Discrete Math.},  341 (2018) 2418--2430.
 %\url{https://arxiv.org/abs/1712.07728} 
 
%{\bibitem{alg2code} B.~Brimkov.  M{\small ATLAB} code for the paper ``Throttling for the game of Cops and Robbers on graphs."  \url{http://www.caam.rice.edu/~bb19/throttling.html}.}

\bibitem{powerdom-throttle}B. Brimkov, J. Carlson, I.V. Hicks, R. Patel, L. Smith. Power domination throttling. \emph{Theoret. Comput. Sci.} (in press, 2019).

%\bibitem{wavefront} S. Butler, J. Grout,  H.T. Hall. Using variants of zero forcing to bound the inertia set of a graph. {\em Electron. J. Linear Algebra}, 30 (2015): 1--18.

\bibitem{BY13throttling} S.~Butler, M.~Young. Throttling zero forcing propagation speed on graphs. {\em Australas. J. Combin.}, 57 (2013), 65--71.

\bibitem{Carlson} J. Carlson. Throttling for zero forcing and variants. Available at \url{https://arxiv.org/pdf/1807.07030}.

\bibitem{PSDthrottle} J.~Carlson, L.~Hogben, J.~Kritschgau, K.~Lorenzen, M.S.~Ross, V.~Valle Martinez.  Throttling positive semidefinite zero forcing propagation time on graphs. {\it Discrete Appl. Math.}, 254 (2019), 33--46.

\bibitem{GTbook} R. Diestel. {\em Graph Theory}, 5th Ed.  Springer, Berlin, 2017.


%\bibitem{Dud} H.E. Dudeney. {\em Amusements in Mathematics}.  1917. Available at \url{http : / / www .gutenberg.org/ebooks/16713}.

\bibitem{frankl1987cops} P.~Frankl. Cops and robbers in graphs with large girth and Cayley graphs. {\em Discrete Appl. Math.}, 17 (1987), 301--305.

%\bibitem{kriv} A.\ Frieze, M.\ Krivelevich, P.\ Loh, Variations on Cops and Robbers, \emph{Journal of Graph Theory} \textbf{69} (2012) 383--402.
%\bibitem{kriv} A. ~Frieze, M. ~Krivelevich, P. ~Loh, Variations on Cops and Robbers, Preprint 2011.

%\bibitem{gavenvciak2010cop} T. Gaven{\v{c}}iak. Cop-win graphs with maximum capture-time. {\em Discrete Math.}, 310 (2010), 1557--1563.

%\bibitem{GRM14} F.~Goldberg, R.~Mathew, D.~Rajendraprasad. Domination in designs. available at \url{https://arxiv.org/abs/1405.3436}.

%\bibitem{HahnLSW} G.~Hahn, F.~Laviolette, N.~Sauer, R.~Woodrow. On cop-win graphs. {\em Discrete Math.}, 258 (2002), 27--41.

\bibitem{MooreBound} A.~Hoffman and R.~Singleton. On Moore Graphs with Diameters 2 and 3. {\em IBM J. Res. Dev.}, 4 (1960), 497--504.

%\bibitem{HahnMacG} G.~Hahn, G.~MacGillivray. A note on $k$-cop, $l$-robber games on graphs. {\em Discrete Math.}, 306 (2006), 2492--2497.

%\bibitem{proptime} L.~Hogben, M.~Huynh, N.~Kingsley, S.~Meyer, S.~Walker, M.~Young. Propagation time for zero forcing on a graph. {\em Discrete Appl. Math.}, 160 (2012), 1994--2005.

%\bibitem{spiders}  M.S.~Ross.  Note on  PSD-throttling on trees.  Preprint.

%\bibitem{land2016upper} M. Land, L. Lu. An upper bound on the burning number of graphs. In  {\em Algorithms and models for the web graph}, 1--8, Lecture Notes in Comput. Sci., 10088, Springer, Cham, 2016.

%\bibitem{loh2017}P.-S. Loh, S. Oh. Cops and Robbers on planar-directed graphs. {\em J. Graph Theory}, 86 (2017), 329--340.

%\bibitem{lu} L. ~Lu, X. ~Peng, On Meyniel’s conjecture of the cop number, Preprint 2011.

%\bibitem{MT} N. Megiddo,  A. Tamir. New results on the complexity of p-centre problems. {\em SIAM J. Comput.}, 12 (1983), 751--758.

%\bibitem{grid_capture_time}A. Mehrabian. The capture time of grids. {\em Discrete Math.}, 311 (2011), 102--105.


\bibitem{karp-ramachandran}
R.M. Karp, V. Ramachandran. Parallel algorithms for shared memory machines. In \emph{Handbook of Theoretical Computer Science, Vol. A.} North Holland, New York (1990) 869-941.

\bibitem{luccio-pagli-conference}
F. Luccio, L. Pagli. Capture on grids and tori with different numbers of cops. In \emph{International Conference on Parallel Computing Technologies} (2019), pp. 431--444.

\bibitem{luccio-pagli}
F. Luccio, L. Pagli. Cops and robber on grids and tori. \emph{arXiv:1708.08255} (2017).


\bibitem{luccio-pagli-decontamination}
F. Luccio, L. Pagli. More agents may decrease global work: A case in butterfly decontamination. \emph{Theoret. Comput. Sci.}, 655 (2016) 41--57.


\bibitem{MM75} A. Meir, J.W. Moon. Relations between packing and covering number of a tree. {\em Pacific J. Math.}, 61 (1975), 225--233.


%\bibitem{mohar} B.~Mohar. Notes on cops and robber game on graphs.  Available at \url{https://arxiv.org/pdf/1710.11281.pdf}.

\bibitem{NW83} R.J.~Nowakowski, P.~Winkler. Vertex-to-vertex pursuit in a graph. {\em Discrete Math.}, 43 (1983), 235--239.

%\bibitem{pisa2016}P. Pisantechakool, X. Tan. On the capture time of cops and robbers game on a planar graph. In {\em International Conference on Combinatorial Optimization and Applications}, 3--17, Lecture Notes in Comput. Sci. 10185,   Springer, Cham,  2016.

\bibitem{praalat2010does} P.~Pra\l at. When does a random graph have constant cop number? {\em Australas. J. Combin.},  46 (2010), 285--296.

\bibitem{Q78} A.~Quilliot. Jeux et pointes fixes sur les graphes.  {\em  Th\`ese de 3\`eme cycle},  Universit\'e de Paris VI (1978) [in French], 131--145.


\bibitem{Q85} A.~Quilliot. A retraction problem in graph theory. {\em Discrete Math.}, 54, (1985), 61--71.

\bibitem{ss1004} A. ~Scott, B. ~Sudakov. A bound for the cops and robbers problem. {\em SIAM J. Discrete Math.}, 25 (2011), 1438--1442.

%\bibitem{sage} Sage Minimum Rank Library Development Team. Sage Minimum Rank Library. Available at \url{https://github.com/jephianlin/mr_JG}.

%\bibitem{PSDpropTime} N.~Warnberg. Positive semidefinite propagation time. {\em Discrete Appl. Math.}, 198 (2016), 274--290.

\end{thebibliography}
\end{document}